\newcommand{\noun}[1]{\textsc{#1}}
  \theoremstyle{plain}
  \newtheorem*{thm*}{\protect\theoremname}
\theoremstyle{plain}
\newtheorem{thm}{\protect\theoremname}
  \theoremstyle{plain}
  \newtheorem{lem}[thm]{\protect\lemmaname}
  \theoremstyle{definition}
  \newtheorem{defn}[thm]{\protect\definitionname}
 \theoremstyle{definition}
 \newtheorem*{defn*}{\protect\definitionname}
\gdef\SetFigFontNFSS#1#2#3#4#5{} %Silence pointless warnings due to xfig
\gdef\SetFigFont#1#2#3#4#5{} %Silence pointless warnings due to xfig
\def\clap#1{\hbox to 0pt{\hss#1\hss}}
\DeclareMathOperator{\Sch}{Sch}
\DeclareMathOperator{\dist}{dist}
\DeclareMathOperator{\supp}{supp}
\DeclareMathOperator{\Res}{Res}
\DeclareMathOperator{\Cay}{Cay}
\definecolor{myblue}{rgb}{0.09,0.32,0.44} %22-84-113
\theoremstyle{remark}
\newtheorem*{qst*}{Question}
\newtheorem*{rmrks*}{Remarks}
\newlength{\tempindent}
\newcommand{\lazyenum}{
\setlength{\tempindent}{\parindent}
\begin{enumerate}[leftmargin=0cm,itemindent=0.7cm,labelwidth=\itemindent,labelsep=0cm,align=left,label=\arabic*)]
\setlength{\parskip}{\smallskipamount}
\setlength{\parindent}{\tempindent}
}
  \providecommand{\definitionname}{Definition}
  \providecommand{\lemmaname}{Lemma}
  \providecommand{\theoremname}{Theorem}
\providecommand{\theoremname}{Theorem}
\let\@wraptoccontribs\wraptoccontribs
\begin{document}

\title{Groups with minimal harmonic functions as small as you like}

\author{Gideon Amir}
\address{
GA: Department of Mathematics, Bar-Ilan University
Ramat Gan 52900, Israel.
e-mail: \tt{gidi.amir@gmail.com}
}
\author{Gady Kozma}
\address{
GK: Department of Mathematics, Weizmann Institute, Rehovot, 76100, Israel. e-mail: \tt{gady.kozma@weizmann.ac.il}
}
\address{
NMB: Department of Mathematics, ETH Zurich, Ramistrasse 101, 8092 Zurich, Switzerland. e-mail: \tt{nicolas.matte@math.ethz.ch}
}
\contrib[with an appendix by]{Nicol\'as Matte Bon}

\begin{abstract}
For any order of growth $f(n)=o(\log n)$ we construct a finitely-generated group $G$ and a set of generators $S$ such that the Cayley graph of $G$ with respect to $S$ supports a harmonic function with growth $f$ but does not support any harmonic function with slower growth. The construction uses permutational wreath products $\nicefrac{\mathbb{Z}}{2}\wr_{X}\Gamma$, in which the base group $\Gamma$ is defined via its properly chosen action on $X$.
\end{abstract}

\maketitle

\section{Introduction}

A harmonic function on a graph is a function $f$ from its vertices
into $\mathbb{R}$ such that for every vertex $v$, $f(v)$ is equal
to the average of $f$ over all the neighbours of $v$. Finite connected
graphs have no nonconstant harmonic functions, and infinite ones have,
in many interesting examples, surprisingly small families of harmonic
functions. For example, on the graph $\mathbb{Z}^{d}$, the only harmonic
functions with polynomial growth are polynomials \cite{H49}.
When the graph is the Cayley graph of some finitely generated group,
it is natural to try to relate properties of harmonic functions to
properties of the group. Graphs for which there are no nonconstant
bounded harmonic functions are called Liouville graphs, and they are
deeply connected with random walk entropy and amenability \cite{A76,D80,KV83,EK10,B+15}.
It is well-known (see e.g.\ \cite{T}) that any infinite Cayley graph supports harmonic functions of linear growth.
In a different regime, harmonic functions with linear growth were
used by Kleiner to give a new proof of Gromov's famous polynomial
growth theorem \cite{K10,T}.

There is an interesting quantitative version of the Liouville question
which goes as follows: for a given Cayley graph, what is the largest
$f:\mathbb{N}\to[0,\infty)$ such that any harmonic function $h$
with $h(x)=o(f(\dist(1,x)))$ is constant? (1 is of course the identity
element of the group, and $\dist$ is the graphical distance in the
Cayley graph). This question was addressed in \cite{B+} where a number
of examples where analysed. In particular, for the two dimensional
lamplighter group $\nicefrac{\mathbb{Z}}{2}\wr\mathbb{Z}^{2}$ it
was shown that it supports a harmonic function with logarithmic growth,
but that any $h$ with $h(x)=o(\log(\dist(1,x)))$ is constant. Surprisingly,
though, it turns out that this cannot be changed by using more complicated
lamps. Indeed, it was shown in \cite{B+} that for
\[
G=(\dotsb(\mathbb{Z}^{2}\wr\mathbb{Z}^{2})\wr\mathbb{Z}^{2})\dotsc\wr\mathbb{Z}^{2}
\]
the same behaviour holds, namely, any sublogarithmic harmonic function
must be constant (in sharp contrast to the behaviour of random walk
return probabilities and entropy on $G$).

In this paper we construct examples of groups with any $f$ between
$\log$ and constant. Here is the precise statement
\begin{thm*}
Let $f$ be a positive $C^{1}$ function on $[1,\infty)$ such that
$f(x)\to\infty$ and such that $xf'(x)$ is decreasing. Then there
exists a finitely generated group $G$ and a finite, symmetric set
of generators $S$ such that the Cayley graph $\Cay(G;S)$ has the
following properties:
\begin{enumerate}
\item There exists a nonconstant harmonic function $h$ on $\Cay(G;S)$
such that $|h(x)|\le Cf(\dist(1,x))$.
\item Any harmonic function $h$ on $\Cay(G;S)$ with $h(x)=o(f(\dist(1,x))$
is constant.
\end{enumerate}
\end{thm*}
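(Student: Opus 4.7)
The plan is to realise $G$ as a permutational wreath product $\nicefrac{\mathbb{Z}}{2}\wr_{X}\Gamma$, where $\Gamma$ is a group of permutations of a countable set $X$, and where both $\Gamma$ and its action on $X$ are designed around the growth rate $f$. As demonstrated in \cite{B+} for $\nicefrac{\mathbb{Z}}{2}\wr\mathbb{Z}^{2}$, the logarithmic potential kernel on $\mathbb{Z}^{2}$ yields a harmonic function of logarithmic growth on the wreath product, and a matching lower bound makes this growth sharp. Our idea is to replace $\mathbb{Z}^{2}$ by an action on $X$ tailored so that the associated ``potential function'' has growth exactly $f$.

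For the upper bound, I would construct a harmonic function on $G$ from a carefully chosen function $\phi$ on $X$, imitating the way the logarithmic potential on $\mathbb{Z}^{2}$ produces a harmonic function on the classical lamplighter: the lamp-observable built from $\phi$ extends to a harmonic function on $G$ provided $\phi$ is harmonic with respect to the induced transition operator on $X$ outside a basepoint. The hypothesis that $xf'(x)$ is decreasing is what makes such a $\phi$ constructible with prescribed growth $f$, since discretely it ensures that the Laplacian of a radial profile of growth $f$ is nonnegative and summable, and hence can be balanced by a walker with an appropriately designed escape profile. The resulting harmonic function then has size of order $f(n)$ at a group element of length $n$.

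The main obstacle is the lower bound: every harmonic function $h$ with $h(x)=o(f(\dist(1,x)))$ must be constant. The natural tool is the martingale $h(W_{n})$ along the simple random walk $(W_{n})$ on $G$. One would first arrange $\Gamma$ itself to be Liouville, so that any harmonic function on $G$ is forced to depend only on the asymptotic lamp configuration rather than on the $\Gamma$-coordinate. An entropy and fresh-site-counting argument in the spirit of Kaimanovich--Vershik and Erschler then translates the information content of the lamp configuration on the walker's trace at time $n$ into a lower bound of order $f(n)$ for the oscillations of any nontrivial $h$, matching the upper bound.

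The crux of the whole construction is therefore the combinatorial design of the $\Gamma$-action on $X$: the group must be simultaneously finitely generated, Liouville, and act so that the associated potential function has growth exactly $f$. I expect this calibration to be carried out by taking $X$ as a disjoint union of layers whose sizes are dictated by $f$, and $\Gamma$ as a group generated by a small number of explicit permutations mixing these layers, with the layer sizes tuned via the inverse of the relation between $f$ and the random-walk escape profile of $\Gamma$ on $X$. Achieving this tuning while preserving the Liouville property of $\Gamma$ is, I anticipate, the single most delicate point of the paper.
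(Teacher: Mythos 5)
Your overall framework is aligned with the paper's: it does build the group as a permutational wreath product $\nicefrac{\mathbb{Z}}{2}\wr_{X}\Gamma$, the upper bound does come from converting a harmonic potential kernel on a Schreier graph into a harmonic function on the wreath product (this is exactly Lemma \ref{lem:a to h}), and the condition that $xf'(x)$ is decreasing is indeed what lets one build, by explicit calculus (Lemma \ref{lem:f nvn}), a branching profile $\ell(n)$ with $\sum 1/(n\ell(n))\approx f(n)$. The paper's concrete choice of $X$ is $T\times\mathbb{Z}$ for a spherically symmetric tree $T$ with branching numbers $\ell(n)$, which is close in spirit to your ``layers whose sizes are dictated by $f$,'' and the product with $\mathbb{Z}$ is essential: it is what makes the electrical resistance $\Res_{T\times\mathbb{Z}}(n)\approx\sum_{k\le n} 1/(k\ell(k))$ behave like $f$ (Lemma \ref{lem:Res nvn}).

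However, there is a serious gap in your lower-bound plan, and it is essentially the inverse of what the paper actually does. You propose to arrange $\Gamma$ Liouville so that ``any harmonic function \dots is forced to depend only on the asymptotic lamp configuration'' and then use entropy/fresh sites to lower-bound oscillations. This is not correct: what needs to be shown (and what the paper does show, in Lemma \ref{lem:copied shamelessly}) is that a harmonic function $h$ with $h(x)=o(f(d(1,x)))$ \emph{does not depend on the lamp configuration at all}. After that, $h$ descends to a harmonic function on the base group $\Gamma=G\times\mathbb{Z}$, and a \emph{quantitative} Liouville property for the base group (via the entropy estimate $H_n\le n^{1/2+o(1)}$ of Lemma \ref{lem:entorpy} combined with the Erschler--Karlsson criterion) kills it. The base group being merely Liouville in the usual bounded sense is not enough, since the descended function is unbounded of growth $o(\log n)$. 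The statement you make as your first step is also literally false: constant functions, and functions factoring through $\Gamma$, are harmonic and do not depend on the lamp at infinity, and the Poisson-boundary structure theorem you are implicitly invoking applies only to bounded harmonic functions, not to unbounded ones of prescribed sub-$f$ growth.

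The missing technical idea is a coupling argument calibrated to the electrical resistance. Given two configurations differing at one lamp $v$, one couples two walks so that they merge at the first time the $v$-coordinate visits the root; the probability they have not merged by an independent exponential time of mean $r^2$ is $O(1/\Res(r))$ (Lemma \ref{lem:everybody knows}), and the optional stopping bound then gives $|h(x)-h(x')|\le o(\Res(r^3))/\Res(r)+O(e^{-cr})$. Finding a subsequence with $\Res(r_k^3)\le C\Res(r_k)$ (Lemma \ref{l:inf_seq}) makes this vanish. This, not the calibration of layer sizes, is the crux; behind it sits the Harnack inequality for spherically symmetric functions on $T\times\mathbb{Z}$, proved by flattening to a weighted planar graph and invoking Delmotte's theorem via volume doubling plus a Poincar\'e inequality (Lemma \ref{lem:arnack}). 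You should also note that trying to apply an entropy criterion directly to the whole wreath product will not work: the entropy of the wreath product is far too large (it grows faster than any power of $\log$), which is exactly why the problem must first be reduced to a harmonic function on the base group.
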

It is a famous open problem whether the Liouville property is a group
property, i.e.\ whether it is possible for the same group to have
two sets of generators with respect to which one Cayley graph is Liouville
and the other is not. (It is known that the Liouville property is
not \emph{quasi-isometrically invariant} for general graphs, see \cite{L87,B91}).
In our case too, we do not know whether the minimal growth rate of
a harmonic function is a group property or it might depend on the
generators. For the \emph{specific} groups we are constructing, though,
it is possible to show that any set of generators has the same minimal
growth of harmonic functions. We will not prove it, though, as it
only adds technical complications to the proof.

Let us warn the reader against confusing a harmonic function of minimal
growth with a minimal harmonic function. A positive harmonic function $h$
is called minimal if any other positive harmonic function $g$ with $g(x)\le h(x)$
for all $x$ is a multiple of $h$ by a constant. Such functions play
a role in the construction of the Martin boundary of a group. Surprisingly,
perhaps, minimal harmonic functions in fact grow very fast. For example,
in $\nicefrac{\mathbb{Z}}{2}\wr\mathbb{Z}$ they actually grow exponentially
fast. A minimal harmonic function $h$ on $\nicefrac{\mathbb{Z}}{2}\wr\mathbb{Z}$
will have a very specific ``direction'' in which it decays exponentially
fast and this prevents any other harmonic function from being smaller
than $h$ \emph{everywhere}, but in a typical direction $h$ will
increase exponentially. We will not prove these claims as they are
somewhat off topic, but they follow in a more-or-less straightforward
manner from the description of minimal harmonic functions using Busemann
functions with respect to the Green metric, see \cite{BHM11}.

Our construction uses \emph{permutational wreath products} --- an
approach with a very successful track record in constructing groups
with interesting behaviour --- but with the following twist. A permutational
wreath product (exact definitions will be given below) starts from
a group acting on a set $X$. In most constructions so far the groups
were automaton groups, and these act naturally on certain sets such
that the result is a ``graphical fractal''. These groups and their
actions have been studied extensively, and the construction requires
deep knowledge of this theory. Here, instead of starting with a well-studied
group, we start with a graph (which we also denote by $X$, the group
will act on its vertices). We colour the edges of the graph such that
any vertex is incident to all colours, and then use the colouring
to construct a group acting on $X$. Each colour (say azure) will
correspond to the permutation of $X$ given by taking every vertex
$x$ to the vertex on the other side of the edge $e$ incident to
$x$ and coloured azure. The group will simply be the group of permutations
generated by all the colours. We shall see that if the colouring is
chosen to have enough repetitions, then the group may be analysed
directly and quite simply, with no need for the heavy combinatorial
analysis typically associated with automaton groups. While not a truly
different technique, more of a different way to think about existing
techniques, we believe it is useful, and in fact it has already been
used in \cite{KV}.

\section{Preliminaries}

\subsection{Graph and random walk preliminaries}

For a graph $G$ and two vertices $x$ and $y$ we denote by $x\sim y$
the case where $(x,y)$ is an edge of $G$, and say that $x$ and
$y$ are neighbours. By $d(x,y)$ or $\dist(x,y)$ we denote the graphical distance
between them i.e.\ the length of the shortest path between them in
the graph (if one exists; $\infty$ otherwise). We denote by $B(x,r)$
the closed ball $B(x,r)=\{y:d(x,y)\le r\}$. If we need to stress that this is taken in some graph $G$, we shall denote the ball by $B_G(x,r)$. The sphere will be denoted by $\partial B$ i.e.\ $\partial B(x,r)=\{y:d(x,y)=r\}$. We shall also use $G$
to denote the set of vertices of $G$, so $x\in G$ means that $x$
is a vertex of $G$. If we need the set of edges of $G$, we shall
denote it by $E(G)$. For two graphs $G$ and $H$ we denote by $G\times H$
the standard graph product i.e. the graph with vertex set $\{(g,h):g\in G,h\in H\}$
and with $(g,h)\sim(g',h')$ if and only if $g=g'$ and $h\sim h'$
or $g\sim g'$ and $h=h'$.

The laplacian $\Delta$ of a graph $G$ is the operator on $\ell^{2}(G)$
defined by
\[
(\Delta f)(x)=\sum_{y\sim x}\left( f(x)-f(y)\right) .
\]
For a graph $G$ the simple random walk on $G$ is the stochastic
process on the vertices of $G$ which, whenever it is in some vertex
$x$, moves to any neighbour of $x$ with equal probability. For an
$x\in G$ and an $A\subset G$, the hitting time of $A$ (from $x)$
is the random time
\[
\min\{t\ge0:R(t)\in A\}
\]
where $R$ is simple random walk on $G$ with $R(0)=x$. If $R$ never
hits $A$, we consider the minimum to be $\infty$.% (and in particular,
%if this happens with positive probability then the hitting time would
%be infinite).

For a graph $G$ and two sets $A$, $B\subset G$, we define $\Res(A,B)$ to be the electrical resistance between them, i.e.\ construct an electrical network where each edge has resistance $1$, and where the sets $A$ and $B$ are fused to be one point each, and then measure the resulting effective resistance between these two points. For a formal definition see, say, the book \cite{DS84}. We shall consider the effective resistance between a point and the boundary of a ball around it with radius $R$, and the main property of effective resistance that we shall use is that this resistance is inversely proportional to the probability that a random walk starting from the point will hit the boundary of the ball before returning. See \cite[\S 1.3.4]{DS84} for more details. Other properties of electrical resistance will only be used to estimate the resistance in the Schreier graphs (Lemmas \ref{l:inf_seq} and \ref{lem:Res nvn}).

We denote by $C$ and $c$ constants, which might change from place
to place or even within the same formula. $C$ will denote constants
which are sufficiently large and $c$ constants which are sufficiently
small. The constants would depend only on the graph at hand (usually denoted by $T$) unless otherwise specified.
The notation $X\approx Y$ is short for $cX\le Y\le CX$.

\subsection{Group preliminaries: Cayley and Schreier graphs, permutational wreath
products}

Let $X$ be a set and let $G$ be a group acting on $X$ from the
right (denoted by $X\curvearrowleft G$). We denote the action of
a $g\in G$ on an $x\in X$ by $x.g$ (so, of course, $x.gh=(x.g).h$).
For a (finite) subset $S\subset G$, the right Schreier graph $\Sch(X;S)$
is the graph whose vertices are $X$ and whose edges are all $(x,x.s)$
for all $x\in X$ and $s\in S$. The (right) Cayley graph $\Cay(G;S)$
is the Schreier graph of the action of $G$ on itself by right multiplication.

The wreath product $\nicefrac{\mathbb{Z}}{2}\wr_{X}G$ is the group
\[
\left(\nicefrac{\mathbb{Z}}{2}\right)^{X}\rtimes G
\]
where the action of $G$ on $(\nicefrac{\mathbb{Z}}{2})^{X}$ implicit
in the notation $\rtimes$ is $g(\omega)(x)=\omega(x.g)$. In other
words, the product on $\nicefrac{\mathbb{Z}}{2}\wr_{X}G$ is
\[
(\omega,g)\cdot(\omega',g')=(\omega+\omega'(\cdot.g),gg').
\]
The action $X\curvearrowleft G$ induces an action $X\curvearrowleft\nicefrac{\mathbb{Z}}{2}\wr_{X}G$
by having the $G$ component act on $X$ and the $\omega$ component
doing nothing at all.
\begin{lem}
\label{lem:a to h}Let $X\curvearrowleft G$ and let $S\subset G$
be finite. Let $o\in X$ and let $a:X\to\mathbb{R}$ satisfy $\Delta a=\delta_{o}$,
where $\Delta$ is the laplacian of $\Sch(X;S)$ and $\delta_{o}$
is the Kronecker delta at $o$. Assume $a(o)=-\frac{1}{2}$. Then
the function
\[
f(\omega,g)=\omega(o)\cdot a(o.g)
\]
is (right) harmonic on $\nicefrac{\mathbb{Z}}{2}\wr_{X}G$ with respect
to the switch-or-move generators i.e.\ $\{(\delta_{o},1)\}\cup\{(0,s):s\in S\}$.
\end{lem}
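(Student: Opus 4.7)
The plan is to verify right-harmonicity at an arbitrary point $(\omega,g)\in\nicefrac{\mathbb{Z}}{2}\wr_{X}G$ by listing its $|S|+1$ neighbours under the switch-or-move generators and matching the resulting identity against the Laplacian equation $\Delta a=\delta_{o}$ on the Schreier graph.

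First I would unpack each generator using the semidirect-product formula $(\omega,g)(\omega',g')=(\omega+\omega'(\cdot.g),gg')$. Each move $(0,s)$ sends $(\omega,g)$ to $(\omega,gs)$, while the switch $(\delta_{o},1)$ sends it to $(\omega+\delta_{o.g^{-1}},g)$, since $\delta_{o}(x.g)=1$ iff $x=o.g^{-1}$. Setting $y:=o.g$, the move neighbours carry $f$-values $\omega(o)\,a(y.s)$, while the switch neighbour carries $(\omega+\delta_{o.g^{-1}})(o)\,a(y)$. The key observation is that $\delta_{o.g^{-1}}(o)$ equals $1$ exactly when $g$ fixes $o$, i.e.\ when $y=o$, and is zero otherwise.

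I then split into two cases. If $y\ne o$ the switch leaves $\omega(o)$ unchanged, and the harmonicity identity $(|S|+1)f(\omega,g)=\sum f(\text{nbr})$ collapses to $|S|\,a(y)=\sum_{s\in S}a(y.s)$, which is exactly $(\Delta a)(y)=0$ in $\Sch(X;S)$ and holds by hypothesis. If $y=o$ one interprets $\nicefrac{\mathbb{Z}}{2}$ multiplicatively as $\{\pm 1\}$ so that the product $\omega(o)\cdot a(y)$ is a genuine real number; the switch then flips the sign of $\omega(o)$, so the switch neighbour takes value $-\omega(o)a(o)$. The harmonicity equation rearranges to $\omega(o)\left[(|S|+2)a(o)-\sum_{s\in S}a(o.s)\right]=0$, and substituting the relation $\sum_{s\in S}a(o.s)=|S|a(o)-1$ coming from $(\Delta a)(o)=1$ leaves the residual $\omega(o)(2a(o)+1)=0$, which is killed precisely by the chosen normalisation $a(o)=-\frac{1}{2}$.

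The main bookkeeping trap is the right-action convention: right-multiplication by $(\delta_{o},1)$ flips $\omega$ at the coordinate $o.g^{-1}$, \emph{not} at $o$ itself nor at $y=o.g$, and this is what forces the switch to influence $\omega(o)$ only at group elements that fix $o$. Once this is tracked correctly, the rest is a routine two-case expansion using nothing beyond $\Delta a=\delta_{o}$ and $a(o)=-\frac{1}{2}$.
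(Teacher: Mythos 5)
Your proof is correct and follows essentially the same route as the paper: a direct expansion of the $|S|+1$ neighbours under the switch-or-move generators, a case split on whether $o.g = o$, and a reduction to $\Delta a = \delta_o$ together with $a(o) = -\tfrac12$. The only cosmetic difference is that you express harmonicity via the averaging identity $(|S|+1)f(\omega,g)=\sum f(\mathrm{nbr})$ whereas the paper writes $\Delta_\wr f = 0$ directly, but the algebra is identical.
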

Here and below, when we write $\omega(o)\cdot a(o.g)$ we consider
the group $\nicefrac{\mathbb{Z}}{2}$ to be embedded in $\mathbb{R}$
as $\pm1$ and $\omega(o)\cdot a(o.g)$ is then just multiplication
in $\mathbb{R}$. Some readers might benefit from thinking about $a$
as the \emph{harmonic potential }(see e.g.\ \cite{S76}) at $o$,
through we are not making any requirement that $a$ be minimal in
any sense.
\begin{proof}
This is a straightforward, if confusing, calculation. Denote by $\Delta_{\wr}$
the laplacian of $\nicefrac{\mathbb{Z}}{2}\wr_{X}G$ with respect
to the switch-or-move generators. We first examine $g$ for which
$o.g\ne o$. For such $g$ we have
\begin{align*}
(\Delta_{\wr}f)(\omega,g) & \stackrel{(*)}{=}f(\omega,g)-f(\omega+\delta_{o.g^{-1}},g)+\sum_{s\in S}f(\omega,g)-f(\omega,gs)\\
 & \stackrel{\textrm{\clap{\mbox{\ensuremath{{\scriptstyle (**)}}}}}}{=}\omega(o)\cdot a(o.g)-\omega(o)\cdot a(o.g)+\sum_{s\in S}\omega(o)\cdot a(o.g)-\omega(o)\cdot a(o.gs)\\
 & \stackrel{\textrm{\clap{\mbox{\ensuremath{{\scriptstyle (***)}}}}}}{=}\omega(o)\cdot(\Delta a)(o.g)=0.
\end{align*}
where $(*)$ follows from the definitions of $\nicefrac{\mathbb{Z}}{2}\wr_{X}G$
and $\Delta_{\wr}$; $(**)$ follows from the definition $f$ and
from the fact that if $o.g\ne o$ then $o.g^{-1}\ne o$ and then $(\omega+\delta_{o.g^{-1}})(o)$=$\omega(o)$;
and in $(***)$, $\Delta$ stands for the laplacian on $\Sch(X;S)$.

In the case that $o.g=o$ we get a similar formula, except that for
the generator $(\delta_{o},1)$ we have $(\omega,g)(\delta_{o},1)=(\omega+\delta_{o},g)$
which inverts the sign at $o$. Hence
\begin{align*}
(\Delta_{\wr}f)(\omega,g) & =\omega(o)\cdot a(o)+\omega(o)\cdot a(o)+\sum_{s\in S}\left(\omega(o)a(o)-\omega(o)a(o.s)\right)\\
 & =\omega(o)\cdot\big(2a(o)+(\Delta a)(o)\big)=0
\end{align*}
since we assumed $\Delta a(o)=1$ and $a(o)=-\frac{1}{2}$.
\end{proof}

\section{Spherically symmetric trees}

As explained in the introduction, we shall construct a group from a
colou\-red graph, and properties of the random walk on that graph will
allow us to infer properties of the group. We now reveal the nature
of the graph in question: it will be a product of a spherically symmetric
tree with $\mathbb{Z}$. We therefore start with some properties of
spherically symmetric trees and random walks on them.

A tree is a graph with no cycles. For a tree $T$ and a marked vertex
$o$ (also called the root of $T$), we say that $T$ is spherically
symmetric (with respect to $o$) if the degree of a vertex depends
only on its graph distance from $o$. For a spherically symmetric
tree $T$ with all degrees of all vertices $2$ or $3$ (except the
root which can have degree $1$ or $2$), we call the points of degree
$3$, branch points. The root of $T$ is considered a branch point
if its degree is $2$.
\begin{defn}
Let $\mathcal{T}$ be the family of spherically symmetric trees with
degrees as in the previous paragraph such that the distances
$b_{i}$ of the branch points from $o$ satisfy $\inf_{i}b_{i+1}/b_{i}>2$.\end{defn}
\begin{lem}
\label{lem:hitting r}Let $T\in\mathcal{T}$. Then for any $r\in\mathbb{N}$
and any $x\in B(o,r)$, the expected hitting time of $o$ from $x$ in the
graph $B(o,r)$ is $\le Cr^{2}$.\end{lem}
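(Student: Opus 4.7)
The plan is to exploit the spherical symmetry of $T$ by projecting simple random walk on $B(o,r)$ to a one-dimensional birth--death chain, and then to bound the expected hitting time of $o$ via the commute time identity, reducing the question to a combinatorial inequality that is forced by the branch-spacing hypothesis.

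Write $n_k$ for the number of vertices of $T$ at distance $k$ from $o$; the definition of $\mathcal{T}$ gives $n_k=2^j$ for $k\in(b_j,b_{j+1}]$, with the convention $b_0:=0$. By spherical symmetry, the projection $t\mapsto d(R(t),o)$ of simple random walk $R$ on $B(o,r)$ is the weighted nearest-neighbour walk on $\{0,1,\dots,r\}$ whose edge from $k-1$ to $k$ has conductance $n_k$, reflected at $r$. Moreover, because $o$ is the only vertex at distance $0$, the expected hitting time of $o$ in $B(o,r)$ from any $x$ at distance $k$ equals the hitting time of $0$ from $k$ in this chain, which is monotone in $k$; it therefore suffices to bound $\mathbb{E}_r[\tau_0]$.

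Next I would apply the commute time identity to the projected chain,
\[
\mathbb{E}_r[\tau_0]+\mathbb{E}_0[\tau_r]=2m\,R_{\mathrm{eff}}(0,r),
\]
where $m=\sum_{k=1}^{r}n_k$ is the total conductance and, by the series law, $R_{\mathrm{eff}}(0,r)=\sum_{k=1}^{r}1/n_k$. Since $\mathbb{E}_0[\tau_r]\ge 0$, this gives $\mathbb{E}_r[\tau_0]\le 2m\,R_{\mathrm{eff}}(0,r)$, and the lemma reduces to the purely combinatorial estimate
\[
\Big(\sum_{k=1}^{r}n_k\Big)\Big(\sum_{k=1}^{r}\tfrac{1}{n_k}\Big)\le Cr^2.
\]

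The final and main step is to verify this inequality. Let $i$ be the largest index with $b_i\le r$ and fix $\alpha>2$ so that $b_{j+1}\ge\alpha b_j$ for every $j$; iteration gives $b_j\le\alpha^{-(i-j)}b_i$. Splitting each sum layer by layer, the full layer $k\in(b_j,b_{j+1}]$ contributes at most $b_{j+1}\cdot 2^j$ to the first sum and $b_{j+1}/2^j$ to the second, while the top (possibly partial) layer contributes at most $r\cdot 2^i$ and $r/2^i$ respectively. Substituting the bound on $b_{j+1}$, the layer contributions form geometric series of ratios $1/(2\alpha)$ and $2/\alpha$ read from the top down, both less than $1$ thanks to $\alpha>2$; hence the top term dominates in each sum, yielding $m\le Cr\cdot 2^i$ and $R_{\mathrm{eff}}(0,r)\le Cr/2^i$ with $C$ depending only on $\alpha$. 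Multiplying, the factors of $2^i$ cancel and we obtain the desired $Cr^2$ bound. The heart of the argument is precisely this cancellation: the doubling hypothesis $\inf_i b_{i+1}/b_i>2$ is indispensable, since without it neither layer sum would be dominated by its top term, and their product could be as large as $r^2\cdot 2^i$ rather than $Cr^2$.
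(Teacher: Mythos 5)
Your argument is correct and follows essentially the same route as the paper's: project the walk onto the distance-from-$o$ coordinate (the paper calls this ``projecting onto $\mathbb{Z}$''), invoke the commute-time identity, and bound the total conductance and the series resistance separately, combining them to get $Cr^2$. The only cosmetic difference is that the paper bounds $\Res(o,x)$ by $\Res(o,\text{leaves})$ and applies the commute-time identity between $o$ and $x$, while you apply it between $0$ and $r$ and invoke monotonicity of $\mathbb{E}_k[\tau_0]$ in $k$; these are equivalent maneuvers, and the geometric-series bookkeeping is the same.
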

\begin{proof}
Let $H$ be the graph given by taking $B(o,r)$ and ``projecting
it on $\mathbb{Z}$'' i.e.\ for every $h\le r$, identifying all
the vertices with distance $h$ from $o$ (so $H$ would have multiple
edges). By spherical symmetry, the hitting time of $o$ in $H$ is
exactly as in $B(o,r)$. To estimate the hitting time in $H$ we use
the commute-time identity (see \cite{C+96}). Let
\[
\ell=\max\{2^{i}:b_{i}<r\}
\]
i.e.\ the number of branches of the tree at $r$ (in particular we
define $\ell=1$ in the case that $b_{1}\ge r$). Hence $|E(H)|\le\ell r$.
The resistance between $x$ and $o$ can be bounded above by the resistance
to the leaves, for which we have the formula
\[
\Res(o,\textrm{leaves})=b_{1}+\sum\frac{\min(b_{i+1},r)-b_{i}}{2^{i}}\le C\frac{r}{\ell}
\]
(where $C$ depends only on $\min b_{i+1}/b_{i}$). We get that the
commute-time is $\le Cr^{2}$ which of course bounds the hitting time
in $H$ and hence also in $B(o,r)$.\end{proof}
\begin{lem}
\label{lem:arnack}Let $T\in\mathcal{T}$. Then $T\times\mathbb{Z}$
satisfies a ``spherically symmetric Harnack inequality'' i.e.\ there
exists $C$ such that for any $x\in T\times\mathbb{Z}$, any $r>0$
and any $h$ which is positive harmonic in $B(x,2r)$ and spherically
symmetric, $\max_{B(x,r)}h\le C\min_{B(x,r)}h$.\end{lem}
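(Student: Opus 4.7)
My plan is to use the spherical symmetry of $h$ to reduce the inequality to a Harnack estimate for a reversible random walk on a two-dimensional region, where classical techniques apply. Writing $|v|:=d_T(o,v)$, spherical symmetry of $h$ is equivalent to the factorisation $h=\tilde h\circ\pi$ for the projection $\pi(v,n):=(|v|,n)$. Because $T$ is spherically symmetric around $o$, the simple random walk on $T\times\mathbb{Z}$ pushes forward under $\pi$ to a reversible Markov chain on $\mathbb{Z}_{\ge 0}\times\mathbb{Z}$ whose transition probabilities at $(k,n)$ depend only on whether $k$ is one of the branching distances $b_i$, and all nonzero values of which lie in $[1/5,1/2]$. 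The function $\tilde h$ is positive and harmonic for the projected chain on the diamond $D:=\pi(B(x,2r))$, inside which $D':=\pi(B(x,r))$ sits as an inner diamond at graph-distance $\ge r$ from the outer boundary of $D$.

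The task is therefore a discrete Harnack inequality for the projected chain on the bounded two-dimensional region $D$. The chain is reversible with respect to a measure $\mu(k,n)$ proportional, up to a factor in $[3,5]$, to the number $N(k):=\#\{v\in T:|v|=k\}$ of vertices of $T$ at distance $k$. The spacing condition $b_{i+1}/b_i>2$ plays the central structural role: in any interval of the form $[a,2a]$ only a bounded number of branching distances $b_i$ occur, so $N$, and hence $\mu$, grows by at most a bounded factor across doubling scales, uniformly in $r$ and in $x$. This yields uniform $\mu$-doubling on $D$, and combined with the standard two-dimensional Poincar\'e inequality it delivers Harnack for $\tilde h$ on $D'$ via a Delmotte-style argument. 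Pulling back through $\pi$ gives the desired estimate $\max_{B(x,r)}h\le C\min_{B(x,r)}h$.

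The main obstacle is the uniform volume doubling of $\mu$, and in particular its uniformity in situations where $B(x,2r)$ reaches down close to the root (where $\mu$ is small and then begins to grow rapidly); the spacing $b_{i+1}/b_i>2$ is precisely what controls this growth, and is the same combinatorial input that underlies Lemma~\ref{lem:hitting r}.
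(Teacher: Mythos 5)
Your outline matches the paper's proof in structure: both flatten $T\times\mathbb{Z}$ to a weighted graph on $\mathbb{Z}_{\ge 0}\times\mathbb{Z}$ via $\pi(v,n)=(d_T(o,v),n)$, both try to establish volume doubling and a Poincar\'e inequality for that weighted graph, both invoke Delmotte's theorem, and both pull back through $\pi$ (this is legitimate precisely because $h$ is assumed spherically symmetric, i.e.\ $h=\tilde h\circ\pi$). Your reversibility observation --- that the projected chain is reversible with respect to $\mu(k,n)\approx N(k)$ --- is exactly the paper's identification of the weighted graph $F$ with weights $|\pi^{-1}(e)|$, and your remark that $b_{i+1}/b_i>2$ forces $N$ to grow by a bounded factor across a doubling scale is the same volume-doubling computation the paper performs.

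The gap is your assertion that the Poincar\'e inequality is supplied by ``the standard two-dimensional Poincar\'e inequality.'' The weighted graph $F$ is \emph{not} $\mathbb{Z}^2$: its vertex and edge weights are $\approx N(k)$, and within a ball $B((k_0,n_0),r)$ that reaches near the root, $N(k)$ can vary by a factor polynomial in $r$ (e.g.\ for $b_{i+1}/b_i$ close to $2$, $N(k)\approx k^{\alpha}$ with $\alpha$ close to $1$). So the weights are not uniformly comparable across a ball, and one cannot transfer the unweighted $\mathbb{Z}^2$ Poincar\'e inequality by a bounded-perturbation argument. Establishing the weak Poincar\'e inequality for $F$ is the substantive analytic step, and it does \emph{not} come for free from volume doubling (Delmotte requires both hypotheses independently). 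The paper handles this by first bounding the spectral gap of the slab $Q=L(w,r)\times\{-r,\dots,r\}$, where $L(w,r)$ is a radial slab of $T$: the hitting-time estimate of Lemma~\ref{lem:hitting r} bounds the mixing time of $L$, hence its spectral gap, the tensor-product structure transfers this to $Q$, and the resulting $\ell^2$ estimate is exactly a Poincar\'e inequality on squares of $F$ after flattening. You have identified only volume doubling as ``the main obstacle,'' but the Poincar\'e inequality near the root is at least as delicate, and your proposal as written does not prove it.
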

\begin{proof}
For any $w\in T$ and any $r$ let $L(w,r)$ be the following subgraph
of $T$: we take all $v\in T$ such that $|d(o,v)-d(o,w)|\le r$,
examine the induced subgraph of $T$ and take the connected component
of $w$. See figure \ref{fig:Lxr}.
\begin{figure}
\input{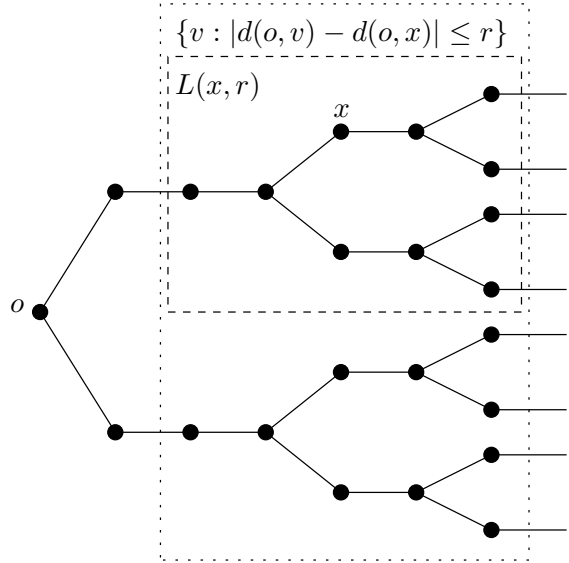}

\protect\caption{The subtree $L(w,r)$\label{fig:Lxr}}
\end{figure}

We first want to bound the second eigenvalue $\lambda_{L}$ of the
Laplacian on $L$. We note that $L$ is itself a tree and satisfies
the conditions of lemma \ref{lem:hitting r}. Hence the hitting time
of its root is bounded by $Cr^{2}$. This is well-known to bound the
mixing time of $L$, say by the equivalence of the mixing time and
the forget time (see \cite{LV97}) or by \cite[Corollary 1.2]{PSSS}.
Since the mixing time bounds the inverse of the second eigenvalue,
we get that $\lambda_{L}\ge cr^{-2}$.

Next we examine $Q=L\times\{-r,\dotsc,r\}$. Since the eigenvalues
of a graph product are simply sums of all couples of eigenvalues of
the two factors, we get a similar estimate for the second eigenvalue
of $Q$ i.e.\ $\lambda_{Q}\ge cr^{-2}$.

We now rewrite this inequality in a functional form that resembles
the Poincar\'e inequality. By this we mean
\begin{equation}
\sum_{x\in Q}m(x)|f-f_{Q}|^{2}\le Cr^{2}\sum_{(x,y)\in E(Q)}m(x,y)|f(x)-f(y)|^{2}\label{eq:PoincareQ}
\end{equation}
where $m(x,y)$ is 1 when $x\sim y$ and 0 otherwise, $m(x)=\sum_{y\in Q}m(x,y)$,
$E(Q)$ is the set of edges of $Q$ and $f_{Q}$ is the average of
$f$ over $Q$.

We now ``flatten'' $T\times\mathbb{Z}$. By this we mean that we
define a new weighted graph $F$. As a graph $F=\mathbb{Z}^{+}\times\mathbb{Z}$
and we define $\pi:T\times\mathbb{Z}\to F$ by $\pi(x,n)=(d(x,o),n)$.
We consider $\pi$ also as a map from $E(T\times\mathbb{Z})$ to $E(F)$.
We then define the weight of each edge $e$ in $F$ to be $|\pi^{-1}(e)|$
(we denote the weights in $F$ by $m(x,y)$ and $m(x)$ too). We remark
that $F$ is not a product graph itself. Inequality (\ref{eq:PoincareQ})
translates to a Poincar\'e inequality for squares in $F$. Indeed,
a square of $F$ is lifted to a disjoint collection of copies of $Q$.
Hence to show the Poincar\'e inequality we take a function $f$ on
the square, lift it (i.e.~consider $f\circ\pi$) to said disjoint
collection, use (\ref{eq:PoincareQ}) for each copy and sum.

Next we show that $F$ satisfies the volume doubling condition i.e.
\[
m(B(x,2r))\le Cm(B(x,r))\qquad\forall x,r
\]
where $B(x,r)$ is a ball in $F$ (in the usual graph metric which
ignores the weights) and where for any set of vertices $S$, $m(S)=\sum_{x\in S}m(x)$.
The easiest way to see this is to note that $m(L(x,2r))\le Cm(L(x,r))$
(this is a straightforward calculation, though we note that it uses
the condition that the branching points $b_{i}$ satisfy $b_{i+1}/b_{i}\ge2$),
conclude the same for $Q$ (this property is preserved by graph products),
and then by flattening for squares in $F$. Moving from squares to
balls follows by inscribing $B(x,2r)$ in a rectangle of side length
$2r+1$ and inscribing in $B(x,r)$ a rectangle of side length $r/2$.

Similarly we need to move the Poincar\'e inequality from squares
to balls, since the usual weak Poincar\'e inequality is stated for
balls, namely
\[
\sum_{x\in B(z,r)}m(x)|f-f_{B(z,r)}|^{2}\le Cr^{2}\sum_{(x,y)\in E(B(z,2r))}m(x,y)|f(x)-f(y)|^{2}\qquad\forall z,r
\]
(it is called ``weak'' because the ball on the left has radius $r$
and on the right has radius $2r$). Nevertheless it is well-know that
the squares and balls version are equivalent under volume doubling
(see e.g.\ \cite[\S 5]{J86}, the setting there is a little different
but the proof is the same).

The purpose of all these manoeuvres was to be able to apply Delmotte's
theorem \cite{D99} which states that the weak Poincar\'e inequality
and volume doubling imply Harnack's inequality. Lifting to $T\times\mathbb{Z}$
gives a Harnack inequality for spherically symmetric functions.\end{proof}
\begin{lem}
\label{cor:a exists}Let $T\in\mathcal{T}$ and let $T\times\mathbb{Z}\curvearrowleft G$.
Then there exists a harmonic function $m$ on $\nicefrac{\mathbb{Z}}{2}\wr_{T\times\mathbb{Z}}G$
such that
\[
m(x)\le C\Res_{T\times\mathbb{Z}}(d(o,x))
\]
where $\Res_{T\times\mathbb{Z}}(n)$ is the electrical resistance
in $T\times\mathbb{Z}$ from $o$ to $\partial B(o,n)$.\end{lem}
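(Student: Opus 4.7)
The plan is to reduce the problem to constructing a function $a\colon T\times\mathbb{Z}\to\mathbb{R}$ satisfying the hypotheses of Lemma \ref{lem:a to h}, i.e., $\Delta a=\delta_o$ and $a(o)=-\tfrac12$, where $\Delta$ is the Laplacian on the Schreier graph (which in our setup coincides with $T\times\mathbb{Z}$), together with a growth bound $|a(x)|\le C\Res_{T\times\mathbb{Z}}(d(o,x))$. Granting such an $a$, Lemma \ref{lem:a to h} gives the harmonic function $m(\omega,g)=\omega(o)\cdot a(o.g)$ on $\nicefrac{\mathbb{Z}}{2}\wr_{T\times\mathbb{Z}}G$. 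Any path in the wreath product from the identity to $(\omega,g)$ must contain at least $d_{T\times\mathbb{Z}}(o,o.g)$ ``move'' steps (lamp switches cannot shorten the base-motion), so $d_{T\times\mathbb{Z}}(o,o.g)\le d_\wr(1,(\omega,g))$; combining with the monotonicity of $\Res_{T\times\mathbb{Z}}(n)$ in $n$ yields $|m(\omega,g)|=|a(o.g)|\le C\Res_{T\times\mathbb{Z}}(d_\wr(1,(\omega,g)))$, as required.

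The function $a$ is constructed as a subsequential limit of truncations. Set $\rho_n=\Res_{T\times\mathbb{Z}}(n)$, $V_n(x)=\mathbb{P}_x(\tau_o<\tau_{\partial B(o,n)})$, and
\[
a_n(x)=-\rho_n\bigl(1-V_n(x)\bigr)-\tfrac12\qquad\text{for }x\in B(o,n).
\]
A direct computation using the identity $\mathbb{P}_o(\tau_{\partial B(o,n)}<\tau_o^+)=(\deg(o)\rho_n)^{-1}$ recalled earlier shows that $a_n(o)=-\tfrac12$ and that $\Delta a_n=\delta_o$ on $B(o,n)\setminus\partial B(o,n)$. Equivalently, $a_n=G_n(\cdot,o)/\deg(o)-\rho_n-\tfrac12$, where $G_n$ is the Green's function of simple random walk killed on exiting $B(o,n)$.

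The crux is the estimate $\psi_n(x):=\rho_n\bigl(1-V_n(x)\bigr)\le C\rho_{d(o,x)}$, uniform in $n\ge 2d(o,x)$. The function $\psi_n$ is nonnegative, cylindrically symmetric, harmonic in $B(o,n)\setminus(\{o\}\cup\partial B(o,n))$, with $\psi_n(o)=0$ and $\psi_n|_{\partial B(o,n)}=\rho_n$. Probabilistically, $\psi_n(x)/\rho_n=\mathbb{P}_x(\tau_{\partial B(o,n)}<\tau_o)$, and the expected behavior --- exact when $T\times\mathbb{Z}$ is spherically symmetric, and correct up to constants in our cylindrically symmetric setting --- is $\psi_n(x)\asymp\rho_{d(o,x)}$. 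To make this rigorous I would apply Lemma \ref{lem:arnack} along a dyadic chain of balls of radii $\approx 2^j$ for $j$ ranging from $0$ to $\log_2 d(o,x)$, avoiding $o$, and show that the increment of $\psi_n$ across each shell $\{d(o,\cdot)\approx 2^j\}$ is comparable to $\rho_{2^{j+1}}-\rho_{2^j}$; telescoping yields $\psi_n(x)\le C\rho_{d(o,x)}$. Once this uniform bound is in hand, a diagonal extraction produces a pointwise limit $a=\lim_k a_{n_k}$ with all the required properties, completing the proof.

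The main obstacle is exactly the passage from the local Harnack inequalities of Lemma \ref{lem:arnack} to the sharp global comparison $\psi_n(x)\lesssim\rho_{d(o,x)}$: Harnack alone, iterated naively, produces a polynomial blow-up, so one must additionally exploit the monotonicity of $\psi_n$ in the radial coordinate together with the resistance structure of the radial projection. This is where the branching assumption $\inf_i b_{i+1}/b_i>2$ on $T\in\mathcal{T}$ is genuinely used, as it guarantees that the dyadic shell resistances increase geometrically and that the telescoping sum terminates at $O(\rho_{d(o,x)})$ rather than $O(\rho_n)$.
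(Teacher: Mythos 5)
Your construction of $a_n$ is the same as the paper's (up to the affine normalization $-1/2$): the paper defines $a_r$ as the unique function with $\Delta a_r=\delta_o$ on $B(o,r-1)$, $a_r(o)=0$, and $a_r$ constant on $\partial B(o,r)$, which is exactly $\rho_r V_r(\cdot)-\rho_r$ in your notation, and it then takes a pointwise limit along a subsequence $r_k\to\infty$ and appeals to Lemma~\ref{lem:a to h}, precisely as you propose. You also correctly isolate the crux: a bound $|a_r(x)|\le C\Res(d(o,x))$ for $d(o,x)\le r/2$, uniform in $r$. So far the two proofs coincide.

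The gap is in how you plan to establish that bound, and it is the one you flag yourself. Chaining Lemma~\ref{lem:arnack} along $\log_2 d(o,x)$ dyadic scales and telescoping shell increments loses a multiplicative constant at each scale, which you would then have to beat back using some radial monotonicity of $\psi_n$; but $T\times\mathbb{Z}$ is only cylindrically, not radially, symmetric (the ``radial coordinate'' is really the pair $(d_T(o_T,t),|z|)$), so the monotonicity you invoke is not available in the form you need, and you give no argument for your claim that each shell increment is comparable to $\rho_{2^{j+1}}-\rho_{2^j}$. The paper never iterates Harnack across scales at all. Instead it works one scale at a time: the optional stopping identity gives
\[
\Res(s)=\frac{1}{\mathbb{P}_o\bigl(X\text{ hits }\partial B(o,s)\text{ before returning to }o\bigr)}=\mathbb{E}_o\bigl(a_r(X(\tau_s))\bigr),
\]
so $\Res(s)$ is already a weighted average of $a_r$ over $\partial B(o,s)$; and the spherically symmetric Harnack inequality, applied with a \emph{bounded} number of balls of radius $\approx s$ chained within the annulus $\{s/2\le d(o,\cdot)\le 2s\}$ (where $a_r$ is positive and harmonic), makes all values of $a_r$ on $\partial B(o,s)$ comparable, hence each one comparable to $\Res(s)$. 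There is no telescoping and no accumulation of constants. To repair your argument you should replace your shell-increment telescoping step by this single-scale identity; your candidate route, as stated, would indeed blow up polynomially, exactly as you suspected.

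Two minor points worth recording. Your reduction observation $d_{T\times\mathbb{Z}}(o,o.g)\le d_\wr\bigl(1,(\omega,g)\bigr)$ is correct and needed to pass from the bound on $a$ to the bound on $m$ stated in the lemma; the paper leaves it implicit. Also, be aware that the paper's stated sign convention for $\Delta$ ($\Delta f(x)=\sum_{y\sim x}(f(x)-f(y))$) is not quite the one used in its computation $\mathbb{E}(a(X_1))=1$; your choice $a_n=-\rho_n(1-V_n)-\tfrac12$ is internally consistent with the convention used in Lemma~\ref{lem:a to h}, so this is a discrepancy in the paper rather than an error of yours.
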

\begin{proof}
Let $r$ be arbitrary, and let $a=a_{r}:B_{T\times\mathbb{Z}}(o,r)\to\mathbb{R}$
satisfy that $\Delta a$ restricted to $B(o,r-1)$ is $\delta_{o}$,
$a(o)=0$ and is constant on $\partial B(o,r)$. Here and below $o=o_{T\times\mathbb{Z}}=(o_{T},0)$.
Since these conditions define $a$ uniquely, it is spherically symmetric.

Let $s<r/2$ and examine random walk $X_{t}$ on $B(o,s)$ starting
from $o$. Let $\tau_s$ be the hitting time of $\partial B(o,s)$, $\tau_o^+$ the first return time to $o$ and $\tau=min(\tau_o^+,\tau_s)$.
 Since $a(X_{t})$ is a (bounded) martingale for $1\le t\le \tau$, we have
\begin{equation*}
\mathbb{E}(a(X_1)) = \mathbb{E}(X_\tau) = {\mathbb{P}(\tau=\tau_s)}\mathbb{E}(a(X(\tau_{s}))\, | \, \tau=\tau_s) + {\mathbb{P}(\tau=\tau_o^+)a(o)}
\end{equation*}
Now $a(o)=0$, $\mathbb{E}(a(X_1))=1$ since $\Delta a(o)=1$ and by the Markov property $\mathbb{E}(a(X(\tau_{s})) \, | \, \tau=\tau_s) = \mathbb{E}(a(X(\tau_{s})))$
Thus
\begin{equation}
\Res(s)=\frac{1}{\mathbb{P}(X\mbox{ hits }\partial B(o,s)\mbox{ before returning to }o)}=\mathbb{E}(a(X(\tau_{s})))\label{eq:Res h}
\end{equation}
%With the first equality following from the connection between effective resistance and hitting probabilities (see e.g. \cite{DS84} section 1.3.4).
By Harnack's inequality (lemma \ref{lem:arnack}), all values of $a$
on $\partial B(o,s)$ are equal up to constants, and hence are also
equal to the expectation in (\ref{eq:Res h}) up to constants. We
get
\[
a_{r}(x)\le C\Res(d(o,x))\qquad\forall d(o,x)\le\frac{1}{2}r.
\]
This means that we may take a pointwise converging subsequence $a_{r_{k}}$.
The limit $a_{\infty}$ satisfies $\Delta a_{\infty}=\delta_{o}$
and $a_{\infty}(x)\le C\Res(d(o,x))$. An appeal to lemma \ref{lem:a to h}
finishes the proof.
\end{proof}

\begin{lem}\label{lem:Res nvn}For any $T\in\mathcal{T}$,
\[
\Res_{T\times\mathbb{Z}}(n)\approx\sum_{k=1}^{n}\frac{1}{n\ell(n)}
\]
where $\ell(n)$ is the number of branches of $T$ at level $n$.\end{lem}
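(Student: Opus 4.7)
The plan is to combine the flattening trick from the proof of Lemma~\ref{lem:arnack} with the Harnack inequality proved there, reading off $\Res_{T\times\mathbb{Z}}(n)$ from a telescoping sum across spheres. (Note that the sum on the right-hand side of the statement should be $\sum_{k=1}^{n}1/(k\ell(k))$; as written the summand does not depend on $k$.)

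First I would reduce to a two-dimensional weighted problem. Let $V$ be the voltage realising $\Res_{T\times\mathbb{Z}}(n)$, normalised by $V(o)=\Res_{T\times\mathbb{Z}}(n)$ and $V\equiv 0$ on $\partial B(o,n)$, so that $V>0$ on the interior by the maximum principle. Since both the source $o$ and the sink $\partial B(o,n)$ are invariant under $\mathrm{Aut}(T,o_T)$ acting on the first factor, $V$ is spherically symmetric and descends to a function on the flattened graph $F=\mathbb{Z}^{+}\times\mathbb{Z}$ from the proof of Lemma~\ref{lem:arnack}, with edge $((h,j),(h+1,j))$ carrying weight $\ell(h+1)$ and $((h,j),(h,j+1))$ carrying weight $\ell(h)$. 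Since $T\in\mathcal{T}$ forces $\ell(h+1)/\ell(h)\in\{1,2\}$, all edges at tree-coordinate $h$ have weight $\approx\ell(h)$. The descent is an exact identification of the resistance problem, so
\[
\Res_{T\times\mathbb{Z}}(n)=\Res_F\bigl((0,0),\partial B_F((0,0),n)\bigr),\qquad B_F((0,0),n)=\{(h,j):h+|j|\le n\}.
\]

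Next I would control $V$ on each intermediate sphere using Harnack. For $0<m<n$, the sphere $\partial B_F((0,0),m)$ is covered by a bounded number (uniform in $m$, by the volume doubling of $F$ established inside Lemma~\ref{lem:arnack}) of balls of radius $c\min(m,n-m)$ lying inside the region where $V$ is harmonic. Chaining the Harnack inequality of Lemma~\ref{lem:arnack} across these balls yields $V\approx v_m$ on the sphere, where $v_m$ denotes any representative value. Letting $\Pi_m$ be the edges of $F$ crossing from sphere $m$ to sphere $m+1$ and $C_m:=\sum_{e\in\Pi_m}w(e)$, the unit current crossing $\Pi_m$ satisfies
\[
1=\sum_{e\in\Pi_m}\bigl(V(y_{\mathrm{in}}(e))-V(y_{\mathrm{out}}(e))\bigr)w(e)\approx (v_m-v_{m+1})\,C_m,
\]
hence $v_m-v_{m+1}\approx 1/C_m$ and telescoping gives $\Res_F\approx\sum_{m=1}^{n-1}1/C_m$.

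Finally I would evaluate $C_m$. Each of the $\approx m$ vertices on $\partial B_F((0,0),m)$ sends essentially one tree-direction and one $\mathbb{Z}$-direction edge outward, of weights $\approx\ell(h)$ at tree-coordinate $h$, so $C_m\approx\sum_{h=0}^{m}\ell(h)$. The assumption $b_{i+1}/b_i>2$ makes this sum geometric: with $i$ maximal such that $b_i\le m$, the block $[b_{i-1},b_i)$ alone contributes $\ell(b_{i-1})(b_i-b_{i-1})\ge 2^{i-2}b_i\approx m\ell(m)$ (using $b_i>2b_{i-1}$), and earlier blocks decay by a factor $\ge 1/2$ per step. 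So $C_m\approx m\ell(m)$, and substitution gives $\sum_{m}1/C_m\approx\sum_{k=1}^{n}1/(k\ell(k))$, as desired. The main delicate point is the Harnack-chaining step: one must verify that the balls of radius $\approx\min(m,n-m)$ fit inside the harmonicity region for every $m$, especially for $m$ near $0$ or $n$; volume doubling ensures that only a uniformly bounded number of chained balls is needed, and the constant in Harnack is therefore uniform in $m$.
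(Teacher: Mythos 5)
You correctly flag the typo (the summand should be $1/(k\ell(k))$), and the bookends of your argument are fine: the reduction to the flattened weighted graph $F$, and the estimate $C_m\approx m\ell(m)$, which is exactly the paper's count of edges between successive squares. However, the middle step has a genuine gap, and it is precisely the step the paper's argument is designed to avoid. The claim $1\approx(v_m-v_{m+1})\,C_m$ does not follow from Harnack. Lemma~\ref{lem:arnack}, chained along a sphere, controls the \emph{multiplicative} oscillation of $V$ on $\partial B_F(m)$, say $\max_{\partial B_F(m)}V\le C'\min_{\partial B_F(m)}V$ with a constant $C'>1$. The additive within-sphere oscillation is therefore of order $(C'-1)v_m$, whereas the cross-sphere drop $v_m-v_{m+1}\approx 1/C_m$ is, for large $m$, vastly smaller. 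In the decomposition $1=\sum_{e\in\Pi_m}\bigl(V(y_{\mathrm{in}}(e))-V(y_{\mathrm{out}}(e))\bigr)w(e)$, the individual differences range over an interval of width comparable to the within-sphere oscillation, not to the mean radial drop; some of them may even be negative. So Harnack for $V$ does not justify replacing each summand by $v_m-v_{m+1}$, and the telescoping fails. (The delicate point you flag at the end --- the covering near $m\approx 0$ or $m\approx n$ --- is a second, separate problem: for $n-m=O(1)$ the sphere has $\approx n$ vertices and you cannot cover it with $O(1)$ Harnack balls, so ``volume doubling gives a uniformly bounded number of balls'' is not correct there.) To rescue a potential-theoretic argument you would need the two-sided Green function bounds that also follow from Delmotte's theorem, and then compare $V$ directly to the Green function of $F$; that is a valid but considerably heavier route.

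The paper decouples the two bounds and never needs to know anything about the actual harmonic voltage. For the lower bound it shorts each square $S_m$, collapsing the network to a one-dimensional series of resistors and getting $\Res\ge\sum 1/E_m$ by Rayleigh monotonicity; on your flattened picture this is literally the Dirichlet principle restricted to radial test functions, which rigorously delivers the one-sided version of your telescoping identity without appealing to $V$ itself. For the upper bound it uses Thomson's principle with an explicit unit flow built by discretizing straight lines from the origin and splitting mass evenly among the $\ell(d(o,t))$ tree branches, whose energy is $\le\sum C/(m\ell(m))$. The radial-restriction Dirichlet idea can save the lower half of your proposal, but for the upper bound you still need either a flow or genuine Green function estimates.
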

\begin{proof}
For the lower bound we use the fact that identifying vertices only
decreases the resistance (See e.g. \cite{DS84} \S 2.2). Let $S_{n}$ be the square
\[
S_{n}=\{(t,k)\in T\times\mathbb{Z}:d(o,t)=n,|k|\le n\mbox{ or }k=\pm n,d(o,t)\le n\}
\]
and note that $S_{N/2}\subset B_{T\times \mathbb{Z}}(o,N)$. Let $E_{n}$ be the number of edges between $S_{n-1}$ and $S_{n}$.
We identify all vertices in $S_{n}$, for all $n$, and get
\[
\Res_{T\times\mathbb{Z}}(N)\ge\Res_{T\times\mathbb{Z}}(o,S_{N/2})\ge\sum_{n=1}^{N/2}\frac{1}{E_{n}}.
\]
To estimate $E_{n}$ examine edges between the two different pieces
of $S_{n}$. The number of edges between the part $\{d(o,t)=n\}$
in $S_{n-1}$ and $S_{n}$ is $(2n+1)\ell(n).$ The number of edges
between the other part of $S_{n-1}$ is (twice) the size of the tree
up to level $n$ i.e.\ $2\sum_{k=1}^{n}\ell(k)\le2n\ell(n)$. The
difference between summing $1/E_{n}$ up to $N$ and up to $N/2$
is no more than a constant. This proves the lower bound.

For the upper bound we construct a unit flow from $(o,0)$ to $S_{n}$
(in fact only to the ``vertical'' part of $S_{n}$, but this is
not important), and use the fact that the energy of any unit flow from $(o,0)$ to $S_{n}$ is an upper bound on $\Res_{T\times\mathbb{Z}}(o,S_{N})$ (See e.g. \cite{DS84} \S 1.3.5).
Let $t\in T$ satisfy $d(o,t)=n$ and let $|k|\le n$.
Examine the line in $\mathbb{R}^{2}$ from $(0,0)$ to $(n,k)$. Discretize
it to a path $\gamma_{k}$ in $\mathbb{Z}^{2}$ which does not go
left, and such that any edge of $\gamma_{k}$ is distance $\le1$
from the line (if there is more than one way to do it, choose one
arbitrarily). Translate $\gamma_{k}$ to a collection of $\ell(n)$
paths in $T\times\mathbb{Z}$ as follows: assume the vertex $(a,b)\in\mathbb{Z}^{2}$
translates to a $(t,b)\in T\times\mathbb{Z}$. If the next edge in
$\gamma_{k}$ is vertical, translate $(a,b\pm1)$ (as the case might
be) to $(t,b\pm1)$. If it is horizontal, translate $(a+1,b)$ to
$(t',b)$ where $t'$ is one of the children of $t$ in $T$. This
gives $\ell(n)$ different paths in $T\times\mathbb{Z}$. Send $\frac{1}{(2n+1)\ell(n)}$
mass through each such path. This is the desired flow.

Let us now calculate the energy of this flow. Since $T\times\mathbb{Z}$
has bounded degree, we may examine flows through vertices instead
of through edges, losing only a constant. Examine therefore a vertex
$(t,b)$, and assume $|b|\le d(o,t)$, since otherwise the flow is
zero. To check how many paths go through it, note that the line in
$\mathbb{R}^{2}$ must pass no further than distance $2$ from it.
This restricts to an angle of opening $\le C/d(o,t)$ and hence there
are no more than $C/d(o,t)$ of the flow in such paths. Further, the
flow divides evenly between the different $\ell(d(o,t))$ branches
at this level, so the flow through each particular branch is $1/\ell(d(o,t))$
of the total flow. We get that the flow is bounded by $C/d(o,t)\ell(d(o,t))$.
Summing the squares gives
\[
\Res_{T\times\mathbb{Z}}(o,S_{N})\le\!\!\!\sum_{\substack{(t,b):\\
|b|\le d(o,t)\le N
}
}\!\!\!\frac{C}{d(o,t)^{2}\ell(d(o,t))^{2}}\le\sum_{n=1}^{N}\frac{C}{n\ell(n)}.
\]
This finishes the lemma.\end{proof}

%The last of the lemmas in this section is incredibly dunce.
\begin{lem}
\label{lem:everybody knows}Let $T\in\mathcal{T}$, let $x,y\in T\times\mathbb{Z}$ and let $r>d(x,y)$. Let $\tau$
be the hitting time of $y$ by a random walk on $T\times\mathbb{Z}$
started from $x$. Let $\tau'$ be an independent exponential variable
with expectation $r^{2}$. Then
\[
\mathbb{P}(\tau>\tau')\approx\frac{1}{\Res_{T\times\mathbb{Z}}(r)}
\]
where the constants implicit in the $\approx$ might depend on $x$
and $y$.\end{lem}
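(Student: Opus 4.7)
The plan is to rewrite $\mathbb{P}_x(\tau > \tau')$ in terms of the $\lambda$-resolvent Green function of the walk with $\lambda := 1/r^2$, and match this to the effective resistance via standard Green-function identities.

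Set $\lambda := 1/r^2$ and define $G_\lambda(u,v) := \sum_{t \ge 0} e^{-\lambda t}\,p_t(u, v)$, the expected number of visits to $v$ by a walk from $u$ up to the independent exponential $\tau' \sim \mathrm{Exp}(\lambda)$. Decomposing on the first visit to $y$ and using the memoryless property of $\tau'$ yields the identities
\[
\mathbb{P}_x(\tau \le \tau') = \frac{G_\lambda(x, y)}{G_\lambda(y, y)}, \qquad \mathbb{P}_y(\tau_y^+ > \tau') = \frac{1}{G_\lambda(y, y)},
\]
where $\tau_y^+$ denotes the first return time to $y$. The lemma therefore reduces to (1) $G_\lambda(y, y) \approx \Res_{T \times \mathbb{Z}}(r)$, and (2) $\mathbb{P}_x(\tau > \tau') \approx \mathbb{P}_y(\tau_y^+ > \tau')$ with constants depending only on $x, y$.

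For (1), I would compare $G_\lambda(y, y)$ with the Green function $G_{B(y, r)}(y, y) = \deg(y) \Res_{T \times \mathbb{Z}}(y, \partial B(y, r)) \approx \Res(r)$ of the walk killed on first exit from $B(y, r)$. The lower bound $G_\lambda(y, y) \gtrsim \Res(r)$ comes from restricting the defining series to times before the walk exits $B(y, r)$: by Markov's inequality applied to the exit-time bound from the mixing-time argument in the proof of lemma \ref{lem:arnack} (valid verbatim on $T \times \mathbb{Z}$), the walk stays in $B(y, r)$ up to time $Cr^2$ with positive probability, and on this event $e^{-\lambda t}$ is bounded below. The upper bound follows from the renewal formula $G_\lambda(y, y) = 1/\mathbb{P}_y(\tau_y^+ > \tau')$ together with the elementary inequality $\mathbb{P}_y(\tau_y^+ > \tau') \ge c\,\mathbb{P}_y(\tau_y^+ > \text{first exit time of } B(y, r)) = c/(\deg(y) \Res(r))$.

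For (2), the lower bound $\mathbb{P}_x(\tau > \tau') \gtrsim 1/\Res(r)$ proceeds in three steps: by a Dirichlet-problem / resistance computation, the walk from $x$ reaches $\partial B(y, r)$ before hitting $y$ with probability $\ge c(x,y)/\Res(r)$; this happens by time $Cr^2$ with constant probability; and from any point of $\partial B(y, r)$ the hitting time of $y$ exceeds $cr^2$ with constant probability, which combined with the memoryless property of $\tau'$ gives the bound. The upper bound $\mathbb{P}_x(\tau > \tau') \lesssim 1/\Res(r)$ is equivalent to $G_\lambda(y, y) - G_\lambda(x, y) \le C(x, y)$ and rests on the heat-kernel comparison $|p_t(y, y) - p_t(x, y)| \le C(x, y)\, p_t(y, y)/\sqrt{t}$ for $t \ge C d(x, y)^2$, which follows from the full parabolic Harnack inequality on $T \times \mathbb{Z}$ (a consequence of the Poincar\'e inequality plus volume doubling on the flattened graph $F$ already established in the proof of lemma \ref{lem:arnack}, but now applied via Delmotte's theorem to general rather than spherically symmetric positive parabolic functions); the short-time contribution $t \le Cd(x, y)^2$ is absorbed into a constant depending on $x, y$ using $G_{B(y, Cd(x,y))}(y, y) \approx \Res_{T \times \mathbb{Z}}(d(x, y))$.

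The main obstacle is step (2): while the lower bound is a careful gluing of escape, exit-time and hitting-time events via the memoryless property, the upper bound requires upgrading the spherically symmetric Harnack inequality of lemma \ref{lem:arnack} to a full parabolic Harnack inequality on $T \times \mathbb{Z}$. This upgrade is routine given the Poincar\'e and doubling ingredients already established in the proof of lemma \ref{lem:arnack}, but it is not stated explicitly there.
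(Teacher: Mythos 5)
Your approach is genuinely different from the paper's: you reformulate everything in terms of the $\lambda$-resolvent Green function $G_\lambda$ and reduce the upper bound to a heat-kernel gradient estimate, whereas the paper's argument is a more hands-on decomposition at a deterministic intermediate time $t_0 = r^2/\Res(r)$, showing separately that $\{\tau' < t_0\}$, $\{d(R_{t_0},y)<s\}$ with $s=r/\Res(r)^{3/2}$, and $\{$exit $B(y,s)$ before hitting $y\}$ each have probability $\lesssim 1/\Res(r)$. The key point that makes the paper's route work is that the event $\{d(R_{t_0},y)<s\}$ projects under the flattening map $\pi$ to a ball-containment event in $F$, which can be handled with the Gaussian \emph{upper} bound on $F$ alone and then pulled back, because distances in $T\times\mathbb{Z}$ dominate distances in $F$.

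The gap in your proposal is in the upper bound of step (2), and it is not the minor technical point you suggest. You need a full parabolic Harnack inequality (or at least a heat-kernel H\"{o}lder/gradient estimate) on $T\times\mathbb{Z}$ itself, and this is \emph{false}, not merely unstated. By Delmotte's theorem the parabolic Harnack inequality is \emph{equivalent} to volume doubling plus the Poincar\'{e} inequality, and $T\times\mathbb{Z}$ does not satisfy volume doubling: take $x$ at distance $b_i$ from the root and $r=b_i-b_{i-1}\approx b_i$. Then $B(x,r)$ sees a single branch and has volume $\approx b_i^2$, while $B(x,2r)$ reaches back past the root into all $\ell(b_i)=2^i$ branches at level $b_i$ and has volume $\approx b_i^2\,2^i$, so $m(B(x,2r))/m(B(x,r))\approx 2^i\to\infty$. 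This is the reason the paper establishes VD and Poincar\'{e} only on the flattened weighted graph $F$, and why Lemma \ref{lem:arnack} is restricted to \emph{spherically symmetric} harmonic functions: that restriction is essential, not an artifact of the proof. The heat kernel $p_t(x,\cdot)$ for a generic fixed $x$ is not spherically symmetric, and neither is $p_t(\cdot,y)$, so nothing in the paper gives you $|p_t(y,y)-p_t(x,y)|\le C(x,y)\,p_t(y,y)/\sqrt{t}$; indeed, it is unclear that such an estimate holds uniformly in $t$. Furthermore, you cannot substitute heat-kernel estimates on $F$ here, because hitting a single vertex $y$ in $T\times\mathbb{Z}$ is not a function of the projected walk on $F$ (many vertices of $T\times\mathbb{Z}$ share the same image in $F$), so $p_t^{T\times\mathbb{Z}}(y,y)$ is not determined by $p_t^F$. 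The rest of your scaffold (the renewal identities, $G_\lambda(y,y)\approx\Res(r)$, the lower-bound gluing) is sound and close in spirit to what the paper does, but without a repair for this step your argument does not close.
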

\begin{proof}Since the claim is trivial in the case that $T\times\mathbb{Z}$ is transient, we shall assume it is recurrent i.e.\ $\Res(r)=\Res_{T\times\mathbb{Z}}(r)\to\infty$ as $r\to\infty$. We may assume that $r$ is sufficiently large. Recall from the proof of lemma \ref{lem:arnack} that the flattening $F$ of $T\times\mathbb{Z}$ satisfies volume doubling and Poincar\'e's inequality. Hence, by Delmotte's theorem \cite{D99} random walk on $F$ satisfies Gaussian bounds
\begin{equation}\label{eq:gauss}
\mathbb{P}^x(R_t=y)\le \frac{Cm(y)}{m(B(x,\sqrt{t}))}\exp(-cd(x,y)^2/t)
\end{equation}
where $m$ is as in the proof of lemma \ref{lem:arnack} as well. In particular we may conclude $\mathbb{P}^x(d(x,R_t)>\lambda \sqrt{t})\le C\exp(-c\lambda^2)$. From this we get (using a simple dyadic decomposition of $[0,t]$) the same bound for $\max_{s\le t}d(x,R_s)$. This last bound extends from $F$ back to $T\times\mathbb{Z}$ since if $R=R[0,t]$ is a random walk on $T\times\mathbb{Z}$ and if the image of $R$ in $F$ stayed in a ball of radius $\lambda\sqrt{t}$ throughout the time interval $[0,t]$, then $R$ must have stayed in a ball of radius $C\lambda\sqrt{t}$.

Another fact we ask the reader to recall, this time from lemma \ref{cor:a exists}, is that the probability that random walk starting from $x$ hits $y$ before hitting $\partial B(y,s)$ is $\approx 1/\Res(s)$ (recall that all $\approx$ signs and all $c$ and $C$ may depend on the points $x$ and $y$).

With these two facts we first conclude the lower bound on the probability. With probability $\approx 1/\Res(r)$ the random walk reaches $\partial B(y,r)$ before hitting $y$. It then has probability $>c$ to walk $cr^2$ time without getting to distance bigger than $\frac 12 r$, which of course prevents it from returning to $y$ during this time interval. During this period of length $cr^2$ there is a positive probability that $\tau'$ will occur. Hence
\[
\mathbb{P}(\tau>\tau')>\frac{c}{\Res(r)}.
\]

For the other direction, we first note that
\[
\mathbb{P}\Big(\tau'<\frac{r^2}{\Res(r)}\Big)\le\frac{1}{\Res(r)}.
\]
Denote this event by $\mathcal{B}_1$. Next we sum \eqref{eq:gauss} over $B(y,s)$ for some $s$ to claim that, for random walk on our flattened graph $F$,
\[
\mathbb{P}^x\big(d(R_{r^2/\Res(r)},y)<s\big)
\le %\sum_{z\in B(y,s)}\frac{Cm(z)}{m(B(x,r/\sqrt{\Res(r)}))}=\\
\frac{Cm(B(y,s))}{m(B(x,r/\sqrt{\Res(r)}))}.
\]
Examining only the $\mathbb{Z}$ direction gives $m(B(y,s))\le m(B(y,s'))\cdot (s/s')$ so
\[
\mathbb{P}^x\Big(d(R_{r^2/\Res(r)},y)<\frac{r}{\Res(r)^{3/2}}\Big)\le\frac{C}{\Res(r)}.
\]
This estimate extends from $F$ to $T\times\mathbb{Z}$ as distances in $T\times\mathbb{Z}$ are bigger than the distances in the projection to $F$. Denote this event by $\mathcal{B}_2$ and denote $s=r/\Res(r)^{3/2}$.

Thus if neither of $\mathcal{B}_1$, $\mathcal{B}_2$ occurred, then the random walk exited a ball of radius $s$ before time $\tau'$. Adding the estimate for the probability for exiting a ball of radius $s$ before $\tau$ mentioned above gives
\begin{equation}\label{eq:Res_r_Res}
\mathbb{P}(\tau>\tau')\le\frac{C}{\Res(r)}+\frac{C}{\Res(s)}.
\end{equation}
Applying lemma \ref{lem:Res nvn} twice gives
\[
\Res(s)\ge c\sum_{n=1}^s \frac{1}{n\ell(n)}\ge c\sum_{n=1}^r\frac{1}{n\ell(n)}-C\log(r/s)\ge c\Res(r)-C\log(r/s).
\]
%% To finish we argue as follows ****OOOOFFFF****. We first claim that
%% \[
%% \Res\big(\partial B(s),\partial B(r)\big)
%% \le C\log(r/s)
%% \]
%% (recall that $\Res(A,B)$ is the resistance between the sets $A$ and $B$). This is because %The resistance from $\partial B(r/\Res(r)^\alpha)$ to $\partial B(r)$ is no larger than $\log\Res(r)$ since
%% our graph $T\times\mathbb{Z}$ contains $\mathbb{Z}^2$. Next we claim that
%% \[
%% \Res\big(0,\partial B(r)\big)\ge
%% \Res\big(0,\partial B(s)\big)+
%% \Res\big(\partial B(s),\partial B(r)\big).
%% \]
%% This is because the sum on the right-hand side is exactly the resistance between $0$ and $\partial B(r)$ once one shortens all vertices of $\partial B(s)$, and shortening only decreases the resistance. Taking both facts together (and with $s=r/\Res(r)^{3/2}$) gives
%% \[
%% \Res(s)\le \Res(r)-C\log\Res(r)
%% \]
We insert this into \eqref{eq:Res_r_Res} to get
\[
\mathbb{P}(\tau>\tau')\le \frac{C}{\Res(r)}+\frac{C}{\Res(r)-C\log\Res(r)}.
\]
The lemma is proved.
%Now, Further, the resistance from $0$ to $\partial B(r)$ is larger than the sum of resistances from $0$ to $\partial B(r/d$
%% For the other direction, define
%% \[
%% E_k=\{\tau>\tau',d(y,R(\tau'))\in (r2^{-k},r2^{-(k-1)}]\}.
%% \]
%% On the one hand we have
%% \[
%% \mathbb{P}(E_k)
%% \]
\end{proof}
\section{Proof of the theorem}

At this point we set out to construct an action of some group $G$
on a tree $T$ and we need $G$ to be ``small'' in a sense to be
prescribed later.
\begin{defn*}
Let $T\in\mathcal{T}$. Colour the edges of $T$ with three colours,
azure, bordeaux and chartreuse such that no two neighbouring edges
have the same colour. Edges between $b_{i}+1$ and $b_{i+1}$ will
be coloured azure and bordeaux alternatingly, with all edges of a
fixed distance from $o$ having the same colour, and chartreuse will
be used in the branching points for one of the children. See figure
\ref{fig:abc}.
\begin{figure}
\input{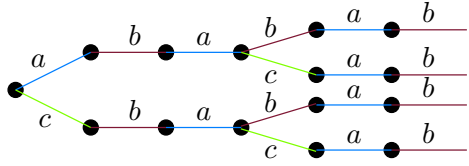}

\protect\caption{Colouring by azure, bordeaux and chartreuse, marked by $a$, $b$
and $c$ respectively for readers with a monochrome copy.\label{fig:abc}}
\end{figure}

Let $G$ be the subgroup of the group of permutations of $T$ (acting
from the right) generated by the following three involutions termed
$a$, $b$ and $c$. The generator $a$, for example, maps every vertex
$x$ to the vertex on the other end of the edge coloured azure containing
$x$, or, if no such edge exists, maps $x$ to itself (if you prefer,
add loops to all vertices of $T$ of degree $2$ and colour them in
the missing colour). We call $G$ the coloured involution group of
$T$.
\end{defn*}

\begin{defn*}
We define a subpolynomially growing tree is a $T\in\mathcal{T}$ such that $|B(o,r)|=r^{1+o(1)}$.\end{defn*}
\begin{lem}
\label{lem:counting balls}Let $T$ be a subpolynomially growing tree.
Then for every $r$ the number of different coloured graphs that can
arise as balls of radius $r$ in $T$ is $r^{1+o(1)}$.\end{lem}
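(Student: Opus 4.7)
The plan is to split the vertices $x\in T$ according to whether they are close to or far from the root, and bound the number of coloured isomorphism classes of $B(x,r)$ in each regime by separate arguments.

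For vertices $x$ with $n:=d(o,x)\le 3r$, I would simply note that $x\in B(o,3r)$, so by the subpolynomial growth hypothesis there are at most $|B(o,3r)|=(3r)^{1+o(1)}=r^{1+o(1)}$ such vertices. Since each one gives rise to at most one coloured isomorphism class of ball, this regime contributes at most $r^{1+o(1)}$ classes.

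For vertices with $n>3r$, the key geometric point is that the interval $[n-r,n+r]$ contains \emph{at most one} branch point. Indeed, if $b_i<b_{i+1}$ both lay in that interval then $b_{i+1}-b_i\le 2r$, while the defining condition $b_{i+1}>2b_i$ of $\mathcal{T}$ forces $b_{i+1}-b_i>b_i$ and therefore $b_i<2r$; but $b_i\ge n-r>2r$, a contradiction. If no branch point lies in $[n-r,n+r]$, then $B(x,r)$ is just a coloured path of length $2r$ with alternating azure--bordeaux edges, which gives only $O(1)$ classes. If exactly one branch point $b_i$ lies in the interval, then $B(x,r)$ is a ``Y-shape'' carrying the alternating pattern with a chartreuse edge glued on at the branch. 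Using the spherical symmetry of $T$ and the explicit rule that every non-chartreuse edge is coloured according to its level while chartreuse appears only at the branch point, one sees that the coloured isomorphism class of $B(x,r)$ is determined by the signed distance $n-b_i\in[-r,r]$ together with a bounded number of discrete choices (whether $b_i$ is above or below $x$; if above, on which side of $b_i$ the point $x$ sits; and the phase of the azure--bordeaux alternation). This yields only $O(r)$ further classes.

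Combining the two cases gives the desired bound $r^{1+o(1)}+O(r)=r^{1+o(1)}$. The only step needing genuine care is confirming that the combinatorial data just listed really does pin down $B(x,r)$ up to coloured isomorphism; but this is a routine verification directly from the definition of the colouring, and once it is in hand the counting above is immediate. The argument does not use any properties of $T$ beyond $T\in\mathcal{T}$ and $|B(o,r)|=r^{1+o(1)}$.
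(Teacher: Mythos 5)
Your proposal is correct and is essentially the paper's argument in a slightly reorganized form: you split by whether $x$ is within distance $3r$ of the root and then deduce the number of visible branch points, while the paper splits directly by the number of branch points in $B(x,r)$ and deduces in the multi-branch-point case that $x$ must lie within $5r$ of the root. The two key facts — at most one branch point is visible far from the root so the ball is a path or a Y with $O(r)$ colourings, and near the root one invokes subpolynomial volume — are identical.
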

\begin{proof}
Let $x\in T$ and examine the number of branch points in $B(x,r)$.
If there are none, then the ball is a line and there are exactly 2
possibilities for the colouring. If there is one, then there are $r+1$
possibilities for the distance of $x$ from the branch point, $3$
possibilities for the ``side'' of $x$ (father's side, chartreuse
child's side or non-chartreuse child's side), and two more possibilities
which depend on whether the non-chartreuse child of the branch point
is azure or bordeaux. All in all we get no more than $12r+12$ possibilities.

Finally, if there is more than one branch point, then we have two
branch points with distance less than $2r$ between them. Since $b_{i+1}/b_{i}>2$,
this means that both branch points are no further than $4r$ from
the root, and $x$ is no further than $5r$ from the root. Since $T$
is subpolynomially growing, the number of possibilities for this is
no more than $r^{1+o(1)}$.\end{proof}
\begin{lem}
\label{lem:entorpy}Let $T$ be a subpolynomially growing tree, and
let $G$ be its coloured involution group. Then $G$ satisfies that
$H_{n}\le n^{1/2+o(1)}$ where $H_{n}$ is the entropy of simple random
walk on the Cayley graph of $G$ with respect to the generators $a$,
$b$ and $c$. \end{lem}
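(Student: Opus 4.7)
The plan is to bound $H_n\le\log|B_G(1,n)|$ and then show $|B_G(1,n)|\le\exp(n^{1/2+o(1)})$.

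First I would project random walk on $G$ via the action on the root $o$. The trace $(o.s_1\cdots s_k)_{k=0}^n$ is simple random walk on the Schreier graph $\Sch(T;\{a,b,c\})$, which is just $T$ (with loops at vertices incident to fewer than three coloured edges). Resistance arguments analogous to Lemmas~\ref{lem:hitting r} and~\ref{lem:Res nvn} applied to $T$ alone, combined with the subpolynomial growth hypothesis $\ell(k)=k^{o(1)}$, give $\Res_T(r)\ge r^{1-o(1)}$. By the standard commute-time bound, the walk stays within $R:=n^{1/2+o(1)}$ of $o$ with probability $1-o(1)$, and by subpolynomial growth $|B_T(o,R)|=n^{1/2+o(1)}$.

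The counting step is the main task. Since $G$ embeds into the symmetric group of $T$, any $g=s_1\cdots s_n\in G$ is determined by its permutation of $T$. For $v\in T$, the value $g(v)=v.s_1\cdots s_n$ is the endpoint of a length-$n$ walk starting at $v$, which depends only on $s$ and on the coloured rooted ball $B(v,n)$: if $\varphi:B(v,n)\to B(v',n)$ is an isomorphism of coloured rooted graphs, then $g(v')=\varphi(g(v))$. Hence $g$ is determined by a choice of action per isomorphism type of $n$-ball, and by Lemma~\ref{lem:counting balls} there are only $n^{1+o(1)}$ such types. Vertices on long stems (distance more than $n$ from any branch point) fall into just two types (by parity), on which $g$ acts as a shift $k_\pm$; under the diffusivity event, $|k_\pm|\le R=n^{1/2+o(1)}$.

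The main obstacle is reducing the naive encoding bound of $n^{1+o(1)}$ bits down to the target $n^{1/2+o(1)}$. This requires exploiting the diffusivity scale $R$ rather than $n$: using $b_{i+1}/b_i>2$, only $O(\log n)$ branch points lie in $B(o,R+n)$, and the coloured balls of radius $R$ around each of these have only $R^{1+o(1)}=n^{1/2+o(1)}$ isomorphism types by Lemma~\ref{lem:counting balls}. Assembling this, the encoding of $g$ comprises (i) the two shifts $k_\pm$ of size $R$, costing $O(\log n)$ bits; (ii) the action of $g$ on $B(o,R)$, contributing $n^{1/2+o(1)}$ bits since this ball has $n^{1/2+o(1)}$ vertices each mapped into a ball of bounded degree around itself; and (iii) a careful accounting of the action near deep branch points, controlled by the isomorphism-type count above. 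Once this bookkeeping is executed, $|B_G(1,n)|\le\exp(n^{1/2+o(1)})$ and hence $H_n\le n^{1/2+o(1)}$.
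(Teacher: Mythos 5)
Your overall framing has a genuine gap: you propose to show the \emph{deterministic} inequality $\log|B_G(1,n)|\le n^{1/2+o(1)}$ and then use $H_n\le\log|B_G(1,n)|$, but the diffusivity scale $R=n^{1/2+o(1)}$ is a property of \emph{typical} random-walk trajectories, not of all words of length $n$. A word such as $(ab)^{n/2}$ shifts every long stem by approximately $n$, so the bound $|k_\pm|\le R$ that you invoke ``under the diffusivity event'' is simply false for a general element of $B_G(1,n)$. There is no way to restrict the group ball to the diffusive elements --- the ball is what it is --- and the honest locality/ball-type bound that you correctly derive as a first step only gives $\log|B_G(1,n)|\le n^{1+o(1)}$. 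Your sketch never closes this factor-of-$\sqrt{n}$ gap; the ``careful accounting'' in item (iii) is precisely where the argument breaks down.

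The paper avoids this by not attempting a volume bound at all. It uses the entropy decomposition
\[
H_n\le \mathbb{P}(\mathcal{B})\,H(X_n\,|\,\mathcal{B})+\mathbb{P}(\mathcal{B}^c)\,H(X_n\,|\,\mathcal{B}^c)+1,
\]
where $\mathcal{B}$ is the event that \emph{some} projection $x.X_n$ has moved farther than $C\sqrt{n\log n}$. Carne--Varopoulos gives $\mathbb{P}(d(x.X_n,x)>C\sqrt{n\log n})\le n^{-3}$ for each fixed ball type, and Lemma~\ref{lem:counting balls} bounds the number of relevant types by $n^{1+o(1)}$, so a union bound yields $\mathbb{P}(\mathcal{B})\le n^{-2+o(1)}$. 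On $\mathcal{B}$ the entropy is crudely bounded by $Cn$, and the product $\mathbb{P}(\mathcal{B})\cdot Cn$ is tiny. On $\mathcal{B}^c$ --- and \emph{only} there --- the group element is determined by choosing a destination inside a ball of radius $n^{1/2+o(1)}$ for each of the $n^{1/2+o(1)}$ ball types of that radius, so the conditional state space has logarithm $n^{1/2+o(1)}$. In short, diffusivity enters only through the conditional entropy on a high-probability event, not as a constraint on the group ball. You need this probabilistic splitting; your bound-the-ball strategy cannot be repaired without it.
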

\begin{proof}
Examine random walk $X_{n}$ on $G$. For every $x\in T$, $x.X_{n}$
is a simple random walk on $T$. By the Carne-Varopoulos theorem (see
\cite{C85,V85,P08} or \cite[\S 13.2]{LP}),
\[
\mathbb{P}(d(x.X_{n},x)>C\sqrt{n\log n})\le n^{-3}
\]
for some $C$ sufficiently large. By lemma \ref{lem:counting balls},
there are only $n^{1+o(1)}$ different balls we need to consider,
hence
\[
\mathbb{P}(\exists x\in T\mbox{ such that }d(x.X_{n},x)>C\sqrt{n\log n})\le n^{-2+o(1)}.
\]
Denote the event above by $\mathcal{B}$. We now divide $H_{n}$ as
follows
\[
H_{n}\leq \mathbb{P}(\mathcal{B})H(X_{n}\,|\,\mathcal{B})+\mathbb{P}(\mathcal{B}^{c})H(X_{n}\,|\,\mathcal{B}^{c}) + 1.
\]
For both terms, we now bound the entropy by the logarithm of the state
space. For $H(X_{n}\,|\,\mathcal{B})$ we bound it simply by $Cn$
while for $H(X_{n}\,|\,\mathcal{B}^{c})$ we have a smaller state
space: since there are only $(C\sqrt{n\log n})^{1+o(1)}=n^{1/2+o(1)}$
different relevant balls, and since each such ball has volume less
than $n^{1/2+o(1)}$, the state space is smaller than
\[
\big(n^{1/2+o(1)}\big)^{n^{1/2+o(1)}}
\]
and its logarithm is less than $n^{1/2+o(1)}$. Combining this with
$\mathbb{P}(\mathcal{B})\le n^{-2+o(1)}$, the lemma is proved. \end{proof}
\begin{thm}
\label{thm:almost}Let $T$ be a subpolynomially growing tree, and
let $G$ be its coloured involution group. Let $H=\nicefrac{\mathbb{Z}}{2}\wr_{(T\times\mathbb{Z})}(G\times\mathbb{Z})$
where $G\times\mathbb{Z}$ acts on $T\times\mathbb{Z}$ by
\[
(t,n).(g,m)=(t.g,n+m)\qquad\forall t\in T,g\in G,n,m\in\mathbb{Z}.
\]
%Consider $G\times\mathbb{Z}$ as a Cayley graph with the generators
%being
Let $S=\{(a,0),(b,0),(c,0),(1,1),(1,-1)\}$ so that $S$ generates $G\times\mathbb{Z}$. Consider
$H$ as a Cayley graph with respect to the switch-or-move generators
$\{(\delta_{o},1)\}\cup\{(0,s):s\in S\}$ where $o$ is the root of
$T$. Then $H$ supports a harmonic function $m$ with
\[
m(x)\le\Res_{T\times\mathbb{Z}}(d(o,x))
\]
and any harmonic function $h$ with $h(x)=o(\Res_{T\times\mathbb{Z}}(d(o,x)))$
is constant.\end{thm}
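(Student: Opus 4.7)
The existence of $m$ is immediate from Lemma~\ref{cor:a exists}, applied to the group $G\times\mathbb{Z}$ acting on $T\times\mathbb{Z}$ as in the hypothesis (with base point $o=(o_{T},0)$). The resulting harmonic function on $H$ satisfies $m(\omega,g,n)\le C\Res_{T\times\mathbb{Z}}(d(o,o.(g,n)))$, and since each switch-or-move generator moves the tracked point $o.(g,n)$ by at most one step of $T\times\mathbb{Z}$ (or not at all, for the switch), we have $d_{T\times\mathbb{Z}}(o,o.(g,n))\le d_{H}(1,(\omega,g,n))$; monotonicity of $\Res$ gives the desired bound $m(x)\le C\Res(d(o,x))$ in the Cayley graph of $H$.

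For the rigidity direction, let $h$ be harmonic on $H$ with $h(x)=o(\Res(d(o,x)))$; the goal is to show $h$ is constant. My plan is to Walsh--Fourier expand $h(\omega,g)=\sum_{A}\chi_{A}(\omega)h_{A}(g)$, where $A$ ranges over finite subsets of $T\times\mathbb{Z}$ and $\chi_{A}(\omega)=\prod_{y\in A}\omega(y)$, and then show that $h_{A}\equiv 0$ for all $A\ne\emptyset$. Substituting the expansion into the harmonicity identity for $h$ and reading off the coefficient of each $\chi_{A}$, one finds $(\Delta h_{A})(g)=-2\,[o.g^{-1}\in A]\,h_{A}(g)$ with $\Delta$ the Cayley-graph laplacian on $G\times\mathbb{Z}$; in other words $h_{A}$ is harmonic on $G\times\mathbb{Z}$ away from the set $\{g:o.g^{-1}\in A\}$, with a source there whose intensity is proportional to $h_{A}$. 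For $A=\{y\}$ this reduces, upon descent to the Schreier graph, to exactly the equation of Lemma~\ref{lem:a to h}, whose solution is $a_{y}(o.g)$ with $a_{y}$ the harmonic potential $\Delta a_{y}=\delta_{y}$, $a_{y}(y)=-\frac{1}{2}$.

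The decisive ingredient is a \emph{matching lower bound} $a_{y}(x)\ge c\Res(d(y,x))$ on the harmonic potential, which is not recorded explicitly in the preceding lemmas but is latent in them: the identity $\Res(s)=\mathbb{E}(a_{y}(X_{\tau_{s}}))$ derived inside the proof of Lemma~\ref{cor:a exists} says that the spherical mean of $a_{y}$ on $\partial B(y,s)$ equals $\Res(s)$, while the spherical Harnack inequality of Lemma~\ref{lem:arnack} bounds individual values of $a_{y}$ on that sphere from below by a constant multiple of this mean. A parallel lower bound propagates to higher-order $h_{A}$ by iterating the construction. Given these lower bounds, the sub-$\Res$ hypothesis forces $h_{A}\equiv 0$ for every non-empty $A$, so $h$ is independent of $\omega$ and descends to a harmonic function $f=h_{\emptyset}$ on the Cayley graph of $G\times\mathbb{Z}$ still satisfying the sub-$\Res$ growth bound.

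It remains to show that such an $f$ is constant. The entropy bound $H_{n}\le n^{1/2+o(1)}$ from Lemma~\ref{lem:entorpy} (supplemented by the $O(\log n)$ entropy of the $\mathbb{Z}$-factor) yields $H_{n}/n\to 0$ on $G\times\mathbb{Z}$, so by Kaimanovich--Vershik the Liouville property holds, which disposes of any bounded $f$; the sub-$\Res$ unbounded case is handled by combining Lemma~\ref{lem:everybody knows} with the sharp asymptotics of Lemma~\ref{lem:Res nvn} in a standard martingale/hitting-time argument on $f(X_{n})$, the sub-$\Res$ growth being incompatible with any non-trivial tail behaviour of the walk on $G\times\mathbb{Z}$. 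The main obstacle throughout is producing the matching lower bound on the harmonic potentials $a_{y}$; once that is in hand the rest of the argument is essentially Walsh bookkeeping and direct appeals to the preceding lemmas.
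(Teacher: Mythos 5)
The existence of $m$ is handled the same way as in the paper, via Lemma~\ref{cor:a exists}, and your observation that $d_{T\times\mathbb{Z}}(o,o.(g,n))\le d_H(1,(\omega,g,n))$ correctly finishes that direction.

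For the rigidity direction your route diverges sharply from the paper's, and it contains genuine gaps. The paper does \emph{not} use a Fourier decomposition: it shows that $h$ is constant across lamp configurations by a direct coupling argument on lazy random walks (Lemma~\ref{lem:copied shamelessly}), quantified via the hitting-time estimate of Lemma~\ref{lem:everybody knows} together with the resistance subsequence trick (Lemma~\ref{l:inf_seq}); once $h$ descends to $G\times\mathbb{Z}$ it is killed not by a martingale/hitting-time argument but by the Erschler--Karlsson theorem~\cite{EK10} applied to the entropy bound of Lemma~\ref{lem:entorpy}, using that $o(\Res_{T\times\mathbb{Z}}(\cdot))=o(\log(\cdot))$. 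Your Walsh--Fourier plan has two specific problems. First, the expansion $h(\omega,g)=\sum_A \chi_A(\omega)h_A(g)$ is over the countable group $(\mathbb{Z}/2)^{\oplus(T\times\mathbb{Z})}$, whose Pontryagin dual is the full uncountable product $(\mathbb{Z}/2)^{T\times\mathbb{Z}}$; a general (in particular unbounded) function on the direct sum is not a convergent series of finitely supported characters, and neither existence, uniqueness, nor the term-by-term harmonicity manipulation of your expansion is justified without a summability hypothesis that $h$ does not satisfy a priori. Second, the putative coefficients $h_A$ are functions on the group $G\times\mathbb{Z}$, not on the Schreier graph $T\times\mathbb{Z}$; there is no reason for $h_A(g)$ to depend only on $o.g$, so the ``descent to the Schreier graph'' step is unavailable, and with it the appeal to the harmonic-potential lower bound. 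Even granting both, for $|A|\geq 2$ one gets a Schr\"odinger-type equation with potential supported on $\{g: o.g^{-1}\in A\}$, whose solution space is not one-dimensional; the sentence ``a parallel lower bound propagates to higher-order $h_A$'' does not constitute an argument. Finally, your treatment of $h_\emptyset$ conflates two regimes: the paper does not use the Kaimanovich--Vershik Liouville criterion (which only disposes of bounded functions) but the quantitative Erschler--Karlsson theorem, which handles the sublogarithmic case uniformly and is exactly what makes the entropy estimate $H_n\le n^{1/2+o(1)}$ do any work. You would need to replace the coupling argument of Lemma~\ref{lem:copied shamelessly} and the appeal to~\cite[Theorem B]{EK10} with rigorous substitutes before this proposal closes.
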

\begin{proof}
The claim that $m$ exists follows from lemma \ref{cor:a exists}.
Let therefore $h$ be a harmonic function with $h(x)=o(\Res_{T\times\mathbb{Z}}(d(o,x)))$.
Recall that $x=(\omega,g,n)$ where $\omega\in\left(\nicefrac{\mathbb{Z}}{2}\right)^{T\times\mathbb{Z}}$
is the lamp configuration, $g\in G$ and $n\in\mathbb{Z}$. Our first
step is
\begin{lem}
\label{lem:copied shamelessly}$h$ does not depend on the lamp configuration
i.e.\ for every $\omega$, $\omega'$, $g$ and $n$, $h(\omega,g,n)=h(\omega',g,n)$. \end{lem}
\begin{proof}
We follow an argument from \cite{B+}. Assume first that $\omega$
differs from $\omega'$ in exactly one point of $T\times\mathbb{Z}$,
call it $v$. Examine lazy random walks $R$ and $R'$ on $H$ started
from $x=(\omega,g,n)$ and $x'=(\omega',g,n)$. Couple them as follows:
they walk exactly the same unless $v.R_{t}=o$ at some time $t$ (recall that $T\times\mathbb{Z}\curvearrowleft H$ via its $G\times\mathbb{Z}$ coordinate).
In this case if $R$ does a $(\delta_{0},1)$ let $R'$ do a lazy
step and vice versa (other kinds of steps are still done together).
In the former case we get $R_{t+1}=R'_{t+1}$ at this time, and we
then change the coupling rule so that they walk together for all time.

Fix some $r>0$ and examine our two coupled walks. Let $\tau'$ be
the stopping time when the two walks coupled as in the previous paragraph.
Let $\tau''$ be an exponential variable  with expectation $r^{2}$, independent of everything else.
Let $\tau=\min\{\tau',\tau''\}$. We now claim that
\begin{equation}
\mathbb{P}(\tau=\tau'')\le\frac{C}{\Res(r)}\label{eq:ooof this independent coin}
\end{equation}
where $C$ might depend on $x$ and $v$ but not on $r$. To see (\ref{eq:ooof this independent coin})
note that at every visit of $v.R_{t}=v.R_{t}'$ to $o$, there is
a positive probability that they will couple. Let $B_{k}$ be the
event that $v.R_{t}$ reached $o$ exactly $k$ times before time
$\tau''$, but no coupling occurred. On the one hand, there is probability
$e^{-ck}$ that in $k$ attempts, coupling always failed. On the other
hand, after the $k^{\textrm{th}}$ visit, $\tau''$ is independent
of the past, so $B_{k}$ implies that $\tau''$ happened before $v.R_{t}$
returned to $o$. By lemma \ref{lem:everybody knows}, because $v.R_{t}$
is a simple random walk on $T\times\mathbb{Z}$, this probability
is $\le C/\Res(r)$. We get
\[
\mathbb{P}(\tau=\tau'')\le\sum_{k=0}^{\infty}e^{-ck}\frac{C}{\Res(r)}
\]
the constant $C$ here comes from lemma \ref{lem:everybody knows},
so it depends on the starting point of the walk, but there are only
two starting points to consider, $v.(g,n)$ (for $k=0$) and $o$
(for $k\ge1$). This shows (\ref{eq:ooof this independent coin}).

Returning to our two coupled walks, since $h(R_{t})$ is a martingale,
\[
h(x)=\mathbb{E}(h(R_{\tau}))=\mathbb{E}(h(R_{\tau})\cdot\mathbf{1}\{\tau=\tau''\})+\mathbb{E}(h(R_{\tau})\cdot\mathbf{1}\{\tau\ne\tau''\})
\]
(justifying integrability is easy because $h(x)\le Cd(o,x)$ and $\tau$
is bounded by $\tau''$ which is an exponential variable). The same
holds for $h(x')$ with $R_{t}'$ instead of $R_{t}$. However, if
$\tau\ne\tau''$ then $R_{\tau}=R_{\tau}'$ so these terms give equal
contribution to $h(x)$ and $h(x')$. We get
\[
|h(x)-h(x')|\le|\mathbb{E}(h(R_{\tau})\cdot\mathbf{1}\{\tau=\tau''\})|+|\mathbb{E}(h(R_{\tau}')\cdot\mathbf{1}\{\tau=\tau''\})|.
\]
A crude bound over $\tau''$ will give
\[
|h(x)-h(x')|\le2\mathbb{P}(\tau=\tau'')\cdot\max\{|h(x)|:d(o,x)\le r^{3}\}+O(e^{-cr}).
\]
By (\ref{eq:ooof this independent coin}) and our assumption on $h$,
we get
\[
|h(x)-h(x')|\le\frac{o(\Res(r^{3}))}{\Res(r)}+O(e^{-cr}).
\]
Lemma \ref{lem:copied shamelessly} now follows from the next simple
claim.\end{proof}
\begin{lem}\label{l:inf_seq}
There exists an infinite sequence $r_{k}\to\infty$ such that
\[
\frac{\Res(r_{k}^{3})}{\Res(r_{k})}\le C.
\]
\end{lem}
\begin{proof}
Examine $\Res(2^{3^{k}})$. Because our graph contains $\mathbb{Z}^{2}$,
$\Res(2^{3^{k}})\le C3^{k}$, (See e.g. \cite{DS84} \S 2.2). Further, this
is an increasing sequence. Hence for infinitely many choices of $k$,
$\Res(2^{3^{k}})\le4\Res(2^{3^{k-1}})$.
\end{proof}
We return to the proof of theorem \ref{thm:almost}. We have just
established lemma \ref{lem:copied shamelessly}, i.e.\ that $h$
does not depend on the lamps. We get that $h(\omega,g,n)=h'(g,n)$,
and $h'$ is harmonic on $G\times\mathbb{Z}$. To show that it is
constant, we use our entropy estimate. Indeed, by \cite[theorem B]{EK10},
if for some $f$ and a sequence $n_{k}$ we have
\[
\lim_{k\to\infty}f(n_{k})\sqrt{H_{n_{k}+1}-H_{n_{k}}}=0
\]
then a harmonic function with growth at most $f$ is constant. By
lemma \ref{lem:entorpy}, the entropy of simple random walk on $G$
(with respect to the generators $a$, $b$ and $c$) is at most $n^{1/2+o(1)}$.
Thus the same holds for $G\times\mathbb{Z}$ (with the added generators
of $\mathbb{Z}$). Denote the entropy of $G\times\mathbb{Z}$ by $H_{n}$.
Thus one may find a sequence $n_{k}$ such that $H_{n_{k}+1}-H_{n_{k}}\le n_{k}^{-1/2+o(1)}$.
We take the function $f$ to be, say, $f(n)=n^{1/8}$ and get that
any harmonic function $h'$ on $G\times\mathbb{Z}$ with $h'(x)\le Cd(o,x)^{1/8}$
is a constant. Since our function $h'$ satisfies
\[
h'(x)=o(\Res_{T\times\mathbb{Z}}(d(o,x)))=o(\Res_{\mathbb{Z}\times\mathbb{Z}}(d(o,x))=o(\log d(o,x)),
\]
it must be constant, and theorem \ref{thm:almost} is proved.
\end{proof}

\subsection{Eligible growth rates}

Theorem \ref{thm:almost} is almost our stated result. We merely need
to demonstrate that for any function $f$ with $f(x)\to\infty$ and
$xf'(x)$ decreasing, one may construct a subpolynomially growing
tree $T$ such that $\Res_{T\times\mathbb{Z}}(n)\approx f(n)$. This
is no more than a calculus exercise, but let us do it in details nonetheless.
\begin{lem}\label{lem:f nvn}Let $f$ be a positive $C^{1}$ function on $[1,\infty)$
such that $f(x)\to\infty$ and that $xf'(x)$ is decreasing. Then
there exist $\ell(n)$ such that
\[
\sum_{n=1}^{N}\frac{1}{n\ell(n)}\approx f(N).
\]
and such that $\ell(n)$ can be taken to be the branching values of
a subpolynomially growing tree. The constant implicit in the $\approx$
may depend on $f$, but not on $N$. \end{lem}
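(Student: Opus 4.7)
Write $g(x) := xf'(x)$, so $g$ is positive and decreasing, $f(x) = \int_1^x g(t)/t\, dt \to \infty$, and consequently $f = O(\log x)$ (since $f(x) \le g(1)\log x$). The natural target for the branching function is $\ell(n) \approx 1/g(n)$, because then $\sum_{n \le N} 1/(n\ell(n)) \approx \sum g(n)/n \approx f(N)$. The construction must respect three constraints simultaneously: $\ell$ takes values in powers of $2$, its jumps (the branch points $b_i$) satisfy $b_{i+1}/b_i > 2$, and the resulting tree has subpolynomial growth.

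I would define the branch points inductively: set $b_0 = 1$, and let $b_{i+1}$ be the smallest integer $n > 3b_i$ such that $g(n) \le g(b_i)/2$ (if no such $n$ exists, which happens exactly when $g$ is eventually bounded below, the recursion terminates and $\ell$ is constant afterwards). Set $\ell(n) = 2^i$ for $n \in [b_i, b_{i+1})$. Then $b_{i+1}/b_i > 3$ by construction, so the tree $T$ lies in $\mathcal{T}$.

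To verify $\sum \approx f(N)$: on the interval $[b_i, b_{i+1}]$ the minimality of $b_{i+1}$ combined with monotonicity of $g$ gives $g(n) \in (g(b_i)/2, g(b_i)]$, so $\int_{b_i}^{b_{i+1}} g(t)/t\, dt \approx g(b_i)\log(b_{i+1}/b_i)$. Iterating $g(b_{i+1}) \le g(b_i)/2$ yields $g(b_i) \approx 2^{-i}$, matching the block's contribution $\log(b_{i+1}/b_i)/2^i$ to $\sum 1/(n\ell(n))$. Telescoping gives the required equivalence up to constants.

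The main obstacle is showing the tree has subpolynomial growth. Since $|B(o,N)| \le N\ell(N)$, it suffices to show $\ell(N) = N^{o(1)}$; by construction $\ell(N) \le C/g(N)$, so this reduces to $1/g(N) = N^{o(1)}$. This should follow from the divergence of $f$: if $1/g$ were polynomially large (and by monotonicity of $1/g$ this would propagate to a long enough interval), the sum $\sum g(n)/n$ would converge, contradicting $f \to \infty$. This is the only delicate point of the argument.
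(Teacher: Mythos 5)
There is a genuine gap in your argument, and it is precisely the difficulty that the paper's proof is designed to work around. You claim that iterating $g(b_{i+1})\le g(b_i)/2$ yields $g(b_i)\approx 2^{-i}$, but the iteration only gives the upper bound $g(b_i)\le g(b_0)2^{-i}$; the implicit lower bound $g(b_i)\ge c\,2^{-i}$, which is what ties your block contribution $2^{-i}\log(b_{i+1}/b_i)$ to the increment $f(b_{i+1})-f(b_i)\approx g(b_i)\log(b_{i+1}/b_i)$, requires $g(b_{i+1})\ge c\,g(b_i)$ for a uniform $c$, and this can fail badly. Take $g$ (smoothed) to be $g_j=1/(j+1)!$ on $[m_j,m_{j+1})$ with $m_{j+1}=m_j e^{(j+1)!}$. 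Then $f(m_k)\approx k$, the hypotheses hold, and your recursion produces branch points with (roughly) one per plateau, so $g(b_{i+1})/g(b_i)\approx 1/(i+2)\to 0$, giving $g(b_i)\approx 1/(i+1)!$ rather than $2^{-i}$. The resulting sum $\sum 1/(n\ell(n))\approx\sum_i 2^{-i}(i+1)!$ grows superexponentially while $f$ grows linearly along the $m_k$'s, so $\sum 1/(n\ell(n))\not\approx f(N)$. The underlying obstruction is structural: a binary branching tree in $\mathcal T$ forces $\ell(2n)\le 2\ell(n)$, but $1/g(n)$ may jump by an arbitrarily large factor when $n$ doubles, and then $\ell$ cannot track $1/g$. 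Capping $\ell$ below $1/g$ causes $\sum 1/(n\ell)$ to overshoot $f$; the question is by how much. The paper's device is to set $w(n)=\min_{k\ge0}4^k/(nf'(n2^{-k}))=\min_k 2^k/g(n/2^k)$, i.e.\ the largest function $\le 1/g$ satisfying the doubling constraint up to constants, and then to show the overshoot is controlled: the upper bound $\sum 1/(nw_2(n))\le C+2f(N)$ comes from a summation-by-parts over the index $k$, with the geometric factor $4^{-k}$ making the extra terms summable. Your construction has no analogue of this control.

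A secondary gap is the subpolynomial-growth step. You reduce it to $1/g(N)=N^{o(1)}$ and assert this follows from $f\to\infty$, but it does not: $g$ decreasing with $\int g(t)/t\,dt=\infty$ is compatible with $g(N_k)\approx 1/N_k$ along a sparse sequence (plateau construction, $N_{k+1}=N_k e^{N_k}$). The paper handles this independently by imposing the cap $w_2(n)=\min(w(n),\log^2 8n)$, and then checks that this extra truncation affects $\sum 1/(nw_2(n))$ by only an additive constant, because $\sum 1/(n\log^2 8n)$ converges. You would need an analogous cap-and-compare step rather than the (false) pointwise bound on $1/g$.
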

\begin{proof}
We would have liked to define $\ell(n)=1/nf'(n)$ but this might not
satisfy the requirement that $\ell(2n)\le 2\ell(n)$, needed for a tree in
$\mathcal{T}$. Define therefore
\[
w(n)=\min_{k=0}^{\infty}\frac{4^{k}}{nf'(n2^{-k})}\qquad w_{2}(n)=\min\{w(n),\log^{2}8n\}
\]
where in the definition of $w$ we extend $f$ below $1$ to be $f(x)=f(1)+f'(1)\log x$,
an extension which preserves the condition that $xf'(x)$ be decreasing, and
ensures that the minimum is achieved. We note that $w(2n)\le2w(n)$.
Indeed, $w(n)=4^{k}/nf'(n2^{-k})$ for some $k$, and using $k+1$
in the definition of $w(2n)$ gives $w(2n)\le4^{k+1}/2nf'(2n\cdot2^{-k-1})=2w(n)$.
The same holds for $\log^{2}8n$ and hence also $w_{2}(2n)\le2w_{2}(n)$.
Let us now estimate $\sum1/nw(n)$. On the one hand we have
\begin{align*}
\sum_{n=1}^{N}\frac{1}{nw_{2}(n)} & \ge\sum_{n=1}^{N}\frac{1}{n\cdot\frac{1}{nf'(n)}}\ge f'(N)+\sum_{n=1}^{N-1}f(n+1)-f(n)\\
 & =f(N)-f(1)+f'(N)\approx f(N)
\end{align*}
where in the first inequality we used the definition of $w$ with
$k=0$ and in the second inequality the fact that $f'$ is also decreasing,
so $f'(n)\ge f'(\xi)=f(n+1)-f(n)$. For the other direction, . we
write
\begin{align*}
\sum_{n=1}^{N}\frac{1}{nw_{2}(n)} & =\sum_{n=1}^{N}\max\frac{1}{n\log^{2}8n},\max_{k=0}^{\infty}\frac{f'(n2^{-k})}{4^{k}}\\
 & \stackrel{(*)}{\le}\sum_{n=1}^{N}\frac{1}{n\log^{2}8n}+\sum_{k=0}^{\infty}\frac{1}{4^{k}}\sum_{n=1}^{N}f'(n2^{-k})\\
 & \le C+\sum_{k=0}^{\infty}\frac{1}{4^{k}}\Big(f'(2^{-k})+2^{k}\sum_{n=2}^{N}f(n2^{-k})-f((n-1)2^{-k})\Big)\\
 & \stackrel{\textrm{\clap{\ensuremath{{(**)}}}}}{\le}C+\sum_{k=0}^{\infty}\frac{1}{2^{k}}f(N2^{-k})\stackrel{(***)}{\le}C+2f(N)
\end{align*}
where in $(*)$ we estimated both maxima by a sum, and rearranged
the summands; $(**)$ follows from the fact that $f(x)=C+C\log x$
for $x<1$; and $(***)$ is due to the fact that $f$ is increasing
($f$ cannot decrease, since if at some $x$ we have $f'(x)<0$ then
$f'$ must be negative forever, contradicting the requirement $f(x)\to\infty$).
Defining
\[
\ell(n)=2^{\lfloor\log_{2}w_{2}(n)\rfloor}
\]
we see that $\sum\frac{1}{n\ell(n)}\approx f$ while at the same time,
$\ell(2n)\le 2\ell(n)$ which is the only condition necessary for $\ell(n)$
to be the branching numbers of a tree in $\mathcal{T}$. Since $\ell(n)\le\log^{2}8n$,
$T$ is also subpolynomially growing.
\end{proof}
Combining lemmas \ref{lem:Res nvn} and \ref{lem:f nvn} and theorem
\ref{thm:almost} proves our main result.

\subsection*{Acknowledgements}

We thank Michael Bj\"orklund for interesting discussions regarding
minimal harmonic functions.

While performing this research, G.A. was supported by the Israel Science Foundation grant \#1471/11. G.K. was supported by the Israel Science Foundation grant \#1369/15 and by the Jesselson Foundation.

\vfill

\newcommand{\Z}{\ensuremath{\mathbb{Z}}}%
\newcommand{\N}{\ensuremath{\mathbb{N}}}%

    \newcommand{\A}{\ensuremath{\mathcal{A}}}%
        \newcommand{\Rcal}{\ensuremath{\mathcal{R}}}%
                \renewcommand{\O}{\ensuremath{\mathcal{O}}}%

        \renewcommand{\G}{\ensuremath{\mathcal{G}}}%

        \newcommand{\M}{\ensuremath{\mathcal{M}}}%
                \renewcommand{\P}{\ensuremath{\mathcal{P}}}%
                \newcommand{\Pbb}{\ensuremath{\mathbb{P}}}%
                \newcommand{\Sbb}{\ensuremath{\mathbb{S}}}%
                \renewcommand{\S}{\ensuremath{\mathcal{S}}}%
                \newcommand{\Ccal}{\ensuremath{\mathcal{C}}}%

                \newcommand{\m}{\ensuremath{\overline{m}}}%

        \renewcommand{\H}{\ensuremath{\mathcal{H}}}%
                \newcommand{\K}{\ensuremath{\mathcal{K}}}%

  \newcommand{\Sym}{\ensuremath{\textrm{Sym}}}%
  \renewcommand{\supp}{\ensuremath{\textrm{Supp}}}%
	\newcommand{\fix}{\ensuremath{\textrm{Fix}}}%
  \newcommand{\gp}{\ensuremath{\textrm{germ}_+}}%
  \newcommand{\gm}{\ensuremath{\textrm{germ}_-}}%
    \newcommand{\acts}{\ensuremath{\curvearrowright}}%
  \newcommand{\sub}{\ensuremath{\operatorname{Sub}}}%
  \newcommand{\stab}{\ensuremath{\operatorname{Stab}}}%
  \newcommand{\homeo}{\ensuremath{\operatorname{Homeo}}}%
    \newcommand{\aut}{\ensuremath{\operatorname{Aut}}}%
    \newcommand{\st}{\ensuremath{\operatorname{St}}}%
    \newcommand{\rist}{\ensuremath{\operatorname{RiSt}}}%
    \newcommand{\cay}{\ensuremath{\operatorname{Cay}}}%

    \newcommand{\PSL}{\ensuremath{\operatorname{PSL}}}%

	\newcommand{\Pw}{\mathrm{P_W}}
	\newcommand{\PwG}{\mathrm{P_W}(G)}
	\newcommand{\autT}{\mathrm{Aut}(T)}
	
	\newcommand{\Cred}{C_{red}^\ast}
	\newcommand{\conj}{\mathcal{C}}
	\newcommand{\urs}{\ensuremath{\operatorname{URS}}}
		\newcommand{\sch}{\ensuremath{\operatorname{Sch}}}

\newcommand{\bigslant}[2]{{\raisebox{-.2em}{$#1$}\backslash\raisebox{.2em}{$#2$}}}

\newtheorem{prop}{Proposition}

\section*{\bf{Appendix: a remark on groups defined by slowly growing trees}}
\begin{center}
Nicol\'as Matte Bon
\end{center}

To prove their main result, the authors introduce a new idea to construct groups, defined by an explicit Schreier graph obtained by labelling a slowly growing tree, which is of independent interest.  We remark in this appendix that this construction yields elementary amenable groups (in fact, [locally finite]--by--dihedral, see Propositon \ref{F: first group} below). In particular the main result of this paper, which is pertinent to the realm of amenable groups, holds already in the more restricted realm of elementary amenable groups.  (Recall that the class of elementary amenable groups is the smallest class of groups that contain finite and abelian groups and is stable under  direct limits and extensions \cite{Chou}; certain types of asymptotic and algebraic behaviours are possible for amenable groups but not for elementary amenable groups).

A variant of this group construction, named \emph{bubble groups}, were introduced and used in \cite{KV} and further studied  in \cite{SCZ}, where their amenability was shown with an analytic method, and the isoperimetric profile of (permutational wreath products over) the bubble groups was studied and shown to realise a wide family of behaviors. Our remark also applies to the bubble groups (showing that they are locally finite--by--metabelian). It would be interesting to know if these group constructions can be used to study the behaviour of other asymptotic invariants among elementary amenable groups.

It is interesting to notice that these group constructions were somewhat inspired by previous ones based on automata groups, which are instead often non-elementary amenable.

\medskip

I thank the authors of this paper, Laurent Bartholdi, and Tianyi Zheng  for interesting conversations on the bubble groups, some related to the \emph{extensive amenability} of their natural action, which motivated in part this observation.
 \medskip

\subsection*{Definition of the groups} We work in parallel with two different groups belonging to two different families,   denoted $G$ and $\Gamma$, that we shall define precisely below. The group $G$ is essentially the same group denoted $G$ in Section 4, with the difference that we also allow the degree of branching points of the tree $T$ to vary (and to be possibly unbounded). The group $\Gamma$ is one of the \emph{bubble groups} from \cite{KV}, in a similar more general setting considered in \cite{SCZ}.

Choose and fix two sequences of positive integers: the \emph{scaling sequence} $(b_i)$, assumed to be strictly increasing, and  the \emph{degree sequence} $(d_i)$. We also assume that $d_i\geq 3$ for every $i\geq 1$. We set $d_\ast=\sup d_i$ (possibly $d_\ast=\infty$).

To define the group $G$, let $T$ be a spherically homogeneous rooted tree as in Figure 1, where the root has degree $d_0\geq 1$, and every other vertex has degree 2, except if its distance from the root is equal to $b_i$ for some $i>0$, in which case it has degree $d_i$. A vertex with degree greater than 2  will be called a \emph{branching point}.  We label the edges of $T$ using the letters $a,b$ and $c_n, n\in [1, d_\ast-2]$ as follows. Edges that are strictly between two branching points will be labelled by $a$ and $b$ alternatively, as in Figure 2. Edges adjacent to a branching point of degree $d_i$ will be labelled by $a, b, c_1, \ldots, c_{d_i-2}$.  Each letter $a, b$ or $c_n$ corresponds to a permutation of the vertex set of $T$,  denoted with the same letter, that exchanges the endpoints of every edge with the corresponding label. As in Section 4 we let $G$ be the group of permutations of $T$ generated by $a, b$ and $c_n, 1\geq n\geq d_\ast-2$. The group $G$ is finitely generated if and only if the degree sequence is bounded.
 Note that the groups appearing in the proof of the main theorem of the paper are of the form $\Z/2\wr_{T\times \Z}(G\times \Z)$ (see Theorem 10), for particular choices of the scaling sequence, and for a degree sequence constant and equal to 3. Therefore they are elementary amenable if and only if the group $G$ is so.

To define the bubble group $\Gamma$, we let $\Theta$ be the (oriented) graph obtained from the graph $T$ as follows.  Every branching point is replaced by an oriented cycle of length $d_i$, called a \emph{branching cycle}, and every path betwen a branching point at generation $i$ and a branching point at generation $i+1$ is replaced by a by an oriented cycle of length $2(b_{i+1}-b_i)$, called a \emph{bubble}; see Figure \ref{fig:bubble}. Edges on a bubble will be labelled by the letter $\alpha$ and edges on a branching cycle will be labelled by $\beta$. We still denote $\alpha, \beta$ the corresponding permutations, that permute cyclically every bubble or branching cycles respectively. The \emph{bubble group} is the group $\Gamma$  generated by $\alpha$ and $\beta$. Note that the group $\Gamma$, unlike the group $G$, is always 2-generated, even if the degree sequence $(d_i)$ is unbounded.
\begin{figure}
  \begin{center}
    %\vspace{-20pt}
    \includegraphics[width=0.6\textwidth]{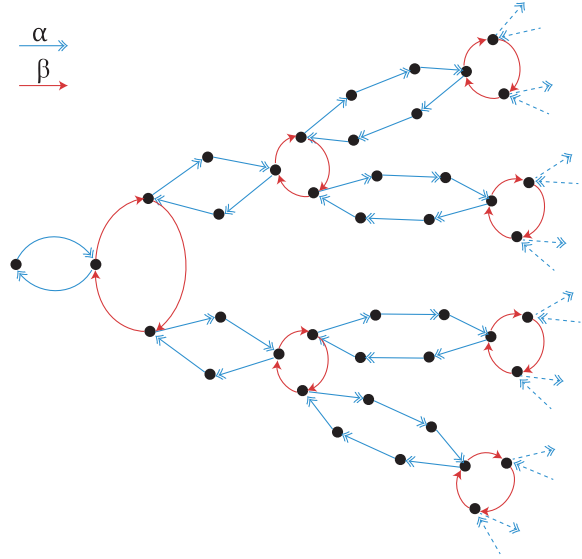}
  \end{center}
  \vspace{-10pt}
  \caption{The graph $\Theta$\label{fig:bubble}}
\end{figure}

\subsection*{Structure of \texorpdfstring{$\Gamma$}{Gamma} and \texorpdfstring{$G$}{G}.} In the statements below we make the assumption that  $b_{i+1}-b_i$ tends to infinity, while no assumption is made on $(d_i)$. Under the same assumptions, the fact the the group $\Gamma$ is amenable was first established in \cite[Proposition 5.13]{SCZ}.

\begin{prop} \label{F: first group} Let $(b_i), (d_i)$ be a scaling sequence and a degree sequence. Assume that $b_i-b_{i-1}$ tends to infinity, and let $(d_i)$ be arbitrary.  Then \begin{enumerate}
\item[(i)]
The group $G$ splits as a semi-direct product of the form
\[ G\simeq  N\rtimes D_\infty,\]
where $N$ is locally finite, and $D_\infty$ is the infinite dihedral group. The surjection $G\to D_\infty$ maps  $a, b$ to two generating involutions of $D_\infty$, and all the generators $c_j, j\geq 1$ to the identity.

\item[(ii)] The group $\Gamma$ is an extension of the form
\[\quad\quad 1\to K\to \Gamma\to C\wr \Z\to 1;\]
where $K$ is locally finite, and $C$ is a non-trivial cyclic group, which is finite if the degree sequence $(d_i)$ is bounded, and infinite cyclic if $(d_i)$ is unbounded.
The surjection $\Gamma \to C\wr \Z$ maps $\alpha$ to a standard generator of $\Z$, and $\beta$ to a standard generator of the lamp group over 0.
\end{enumerate}
In particular the groups $G$ and $\Gamma$ are elementary amenable.
\end{prop}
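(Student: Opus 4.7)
The plan is to treat (i) and (ii) by a common template. In each case I would (a) construct a surjection onto the claimed quotient via a simple ``erasing'' argument, (b) exhibit a section or lift, and (c) prove local finiteness of the kernel by showing that a finitely generated subgroup acts through only finitely many local types of branching neighbourhoods. The spacing hypothesis $b_i - b_{i-1} \to \infty$ is crucial throughout.

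\textbf{Part (i).} I would first identify $\langle a, b\rangle \subset G$ with $D_\infty$: the connected components of the subgraph of $T$ formed by $\{a, b\}$-coloured edges are infinite rays (at each branching point the skeleton continues through the main child), on each of which $\langle a, b\rangle$ acts as the standard dihedral action on $\mathbb{N}$, so $ab$ has infinite order. The map $\phi\colon G \to D_\infty$ defined on generators by $a \mapsto s$, $b \mapsto t$, $c_j \mapsto 1$ would then be shown well-defined via the \emph{erasing lemma}: for any word $w = s_1 \cdots s_L$ in the generators and any vertex $v \in T$ with $d(v, \mathrm{branching}) > L$, one has $v.w = v.\widetilde w$, where $\widetilde w$ is $w$ with every $c_j$-letter deleted (by induction on $L$, since the intermediate vertices lie in the $L$-ball around $v$, which avoids the branching set, so every $c_j$ acts there as the identity). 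Consequently, if $w = 1$ in $G$ then $\widetilde w \in \langle a, b\rangle$ fixes every such $v$, and the spacing hypothesis provides at least two such vertices on a common $\{a,b\}$-ray; since every nontrivial element of $D_\infty$ has at most one fixed point on $\mathbb{N}$, we conclude $\widetilde w = 1$. The inclusion $\langle a, b\rangle \hookrightarrow G$ is a section, yielding $G \cong N \rtimes D_\infty$ with $N = \ker\phi$. For local finiteness of $N$, take a finitely generated $H = \langle n_1, \ldots, n_k\rangle \leq N$, set $L = \max_i |n_i|$ and let $J$ be the largest index $j$ of a $c_j$ appearing. By the erasing lemma, $H$ fixes every vertex at distance $> 2L$ from branching. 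At each branching point $v$ of degree $d_l$, the action of $H$ is confined to the ``$J$-sub-star'' at $v$---consisting of the parent branch, the main-child branch, and the $c_j$-child branches for $j \leq J$---whose labelled isomorphism type depends only on $\min(J, d_l - 2)$ (at most $J+1$ generic types) plus finitely many exceptional types from the finitely many $l$ with $b_{l+1} - b_l \leq 2L$ or $b_l - b_{l-1} \leq 2L$. By the label-preserving spherical symmetry, each $n_i$ acts identically at all branching points of the same type, so the faithful action of $H$ on $T$ embeds $H$ into the finite product $\prod_\tau \Sym(\tau)$ over local types, whence $H$ is finite.

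\textbf{Part (ii).} The plan is parallel. Using $b_i - b_{i-1} \to \infty$, $\alpha$ has infinite order in $\Gamma$ (as the lcm of the bubble lengths $2(b_{i+1} - b_i)$) and $\beta$ has order $n := \mathrm{lcm}(d_i) \in \N \cup \{\infty\}$; set $C = \Z/n\Z$ (so $C = \Z$ when $n = \infty$). I would construct $\psi\colon \Gamma \to C\wr\Z$ with $\alpha \mapsto$ shift and $\beta \mapsto$ lamp at $0$ by the analogous erasing technique: for a vertex in the interior of a bubble at distance $> L$ from every branching cycle, the $\beta$'s in any word $w$ of length $\leq L$ act trivially on the trajectory, so $w$ acts on the vertex as the pure $\alpha$-shift of exponent $n(w)$; if $w = 1$ in $\Gamma$, a sufficiently long bubble forces $n(w) = 0$. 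Applying the same principle at vertices on branching cycles that are $\alpha$-separated from every other branching cycle, one verifies that $w = 1$ in $\Gamma$ forces the net $\beta$-rotation at each position to be a multiple of the corresponding $d_l$, hence zero in $C$. The infinite-order element $\alpha$ lifts the shift generator of $C\wr\Z$, giving the extension. Local finiteness of $K = \ker\psi$ follows from the same ``finitely many local types'' argument, with bubbles in the role of segments and branching cycles in the role of branching points.

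The main obstacle is the local finiteness step: specifically, verifying that only finitely many labelled isomorphism types of $L$-neighbourhoods occur across all branching points (resp.\ branching cycles) for a fixed word length $L$. This is where the hypothesis $b_i - b_{i-1} \to \infty$ enters decisively, ensuring that all but finitely many branching points have ``generic'' $L$-neighbourhoods determined only by their degree and by the bounded set of active $c_j$-branches. A secondary subtlety in Part (ii) is the case $(d_i)$ unbounded, when $C$ must be taken infinite cyclic; then the lamp-component argument must be executed at each branching cycle level separately to ensure vanishing in $C = \Z$.
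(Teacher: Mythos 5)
Your proposal takes a genuinely different route from the paper's. The paper works throughout in the space of Schreier graphs $\operatorname{Sch}(G,S)$: it identifies the finitely many (unrooted) labelled graphs occurring in the closure of the orbit of $(T,o)$ (resp.\ $(\Theta,o)$), uses each such graph to manufacture a quotient of $G$ (resp.\ $\Gamma$), and then analyses the kernel of the diagonal map into the product of these quotients, showing it consists of finitely supported permutations. You instead prove the surjection onto $D_\infty$ directly by an ``erasing lemma'' at vertices far from the branching set, and prove local finiteness of the kernel by bounding, for a finitely generated $H\le N$ with word-length bound $L$ and generator-index bound $J$, the number of labelled isomorphism types of the pieces supporting $H$, yielding an embedding of $H$ into a finite product of finite symmetric groups. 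For part (i) this is correct and in fact a little cleaner in one respect: because you cut off at $J$ rather than at the degree sequence, you handle bounded and unbounded $(d_i)$ uniformly, whereas the paper has to write $G$ as an ascending union $G\vert_j$ in the unbounded case. The care you must take about overlapping balls near closely-spaced branching levels is provided for by the hypothesis $b_i-b_{i-1}\to\infty$, as you say.

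For part (ii) there is one concrete flaw, plus a sketchier step. You set $C=\mathbb{Z}/n\mathbb{Z}$ with $n=\operatorname{lcm}(d_i)$, i.e.\ the order of $\beta$ in $\Gamma$; but your own verification that $\psi$ is well-defined only detects the lamp value $f_w(p)$ modulo $d_l$ at branching cycles far from the root (those are the only ones whose neighbourhoods are clean enough for the trajectory argument). Degrees occurring only finitely often near the root are therefore invisible to the argument, and the cyclic group you actually obtain is $\mathbb{Z}/\ell\mathbb{Z}$ where $\ell$ is the least common multiple of the degrees occurring \emph{infinitely often} (the paper's ``admissible'' integers), not $\operatorname{lcm}(d_i)$. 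If some $d_i$ appears finitely often and contributes a new prime power, your claimed surjection onto $\bigl(\mathbb{Z}/\operatorname{lcm}(d_i)\bigr)\wr\mathbb{Z}$ has not been shown to be well-defined. This does not affect the Proposition (which only asserts the existence of some nontrivial cyclic $C$ of the right cardinality), and the fix is simply to use $\ell$ in place of $\operatorname{lcm}(d_i)$; but as written the step is not justified. The remaining gap is that the part (ii) local-finiteness argument is only asserted by analogy; to make it precise one must show that an element of $\ker\psi$ of bounded word length fixes every vertex on or near a branching cycle at a ``far'' level, which requires carefully relating the trajectory in $\Theta$ (which alternates between $\beta$-rotations around a branching cycle and $\alpha$-moves along bubbles) to the trajectory in $C\wr\mathbb{Z}$, rather than being a literal repeat of the $D_\infty$ case.
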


Before the proof, let us fix some terminology on spaces of Schreier graphs. If $H$ is a finitely generated group endowed with a finite generating set $S$, we denote $\sch(H, S)$ the space of   rooted, connected, oriented graphs with edges labelled by $S$, that arise as Schreier graphs of the pair  $(H, S)$, up to isomorphisms of rooted connected labelled graphs. Endowed with the topology induced by the space of marked graphs, $\sch(H,S)$ is a compact space, on which $H$ acts continuously by moving the root in the natural way. As it is well-known and easy to see, this action is conjugate to the conjugation action of $H$ on its space of subgroups $\sub(H)$ endowed with the Chabauty topology (however we will not need this point of view here).
  In our conventions, edges of a Schreier graphs are always oriented, with the exception that we represent edges corresponding to a generator of order two by a single unoriented one rather than two oriented ones (this is consistent with the fact that $T$ is not oriented, while $\Theta$ is). We systematically omit the loops when representing Schreier graphs. All Schreier graphs will be intended as \emph{rooted} and \emph{labelled} graph, and we will specify \emph{unrooted} when we forget the root.

 Proposition \ref{F: first group} follows by analysing the closure of the orbits of $T$ and $\Theta$ in the  spaces of Schreier graphs of $G$ and $\Gamma$. In one sentence, each graph in the closure provides a quotient of the groups, and these can be readily described.
\begin{proof}

Throughout the proof, we say that an integer $n\geq 3$ is \emph{admissible} if $d_i=n$ for infinitely many $n$s. For simplicity, let us first consider the case where the degree sequence $d$ is bounded.
We begin by proving (i) and then explain the modifications needed for (ii). Let $S=\{a, b, c_1, \ldots \}$ be the standard generating set of $G$. Let $o\in T$ be the root, and view $(T, o)$ as an element of $\sch(G, S)$. We look at the closure of the $G$-orbit of $(T, o)$ in $\sch(G, S)$. Each Schreier graph in the closure has underlying unrooted graph  of one of the following three types: the graph $T$, the graph $\overline{T}_n$  as in Figure \ref{fig:not-bubble} A, where the degree $n$ of its unique branching point runs over admissible integers, and the graph $\hat{T}$, shown in Figure \ref{fig:not-bubble} B.  Each of these labelled graphs naturally defines a group of permutations of its vertices, generated by the permutations of its vertices that correspond to each letter (as in Section 2). Since all these graphs are Schreier graphs of $(G, S)$, the group defined by each of them is a quotient of $G$. The group defined by $\hat{T}$ is simply the dihedral group $D_\infty$. Therefore we have a surjection $p\colon G\to D_\infty$ mapping $a, b$ to two generating involutions of $D_\infty$. Since $a,b$ already generate a subgroup isomorphic to $D_\infty$ in $G$, the group $G$ splits as a semi-direct product of the form $G\simeq\ker(p)\rtimes D_\infty$. We shall now check that $\ker(p)$  is locally finite. For each admissible $n$ we denote $\overline{G}_n$ the group of permutations of vertices of $\overline{T}_n$ defined by the labelling of edges of $\overline{T}_n$, and $\pi_n\colon G \to \overline{G}_n$ the associated natural surjection. Observe that $\hat{T}$ is also in the closure of the orbit of the rooted graphs whose underlying graph is $\overline{T}_n$, therefore it is also a Schreier graph of $\overline{G}_n$ for every $n$. Thereby we also have a surjection $p_n\colon \overline{G}_n\to D_\infty$. Clearly $\ker(p_n)$ is a locally finite subgroup of $\overline{G}_n$: in fact every element of $\ker(p_n)$ acts trivially sufficiently far from the branching point of  $\overline{T}_n$, hence $\ker(p_n)$ consists of permutations of $\overline{T}_n$ with finite support.  Moreover it is clear that $p=p_n\circ \pi_n$, as it is seen by looking at the images of generators. It follows that $\pi_n$ maps $\ker(p)$ inside $\ker (p_n)$ for every admissible $n$.  Consider the diagonal map $\pi: \prod_n \pi_n\colon G\to \prod_n\overline{G}_n$, where the product is taken over all admissible $n$s. It follows from the discussion above that $\pi$ maps $\ker(p)$ to the locally finite group $\prod_n \ker p_n$. Hence to check that $\ker(p)$ is locally finite, it is enough to check that $\ker \pi=\cap_n \ker \pi_n$ is locally finite (since an extension of two locally finite groups is still locally finite). To  see this,  let $g\in \operatorname{ker}{\pi}$ and let $|g|$ be its word metric with respect to the standard generating set. Since $b_i-b_{i-1}$ tends to infinity, the ball of radius $|g|$ around any vertex $v\in T$ sufficiently far from the root contains at most one branching point. Hence for every sufficiently far vertex $v\in T$, the ball of radius $|g|$ around $v$ coincides with a ball in some $\overline{T}_n$, for $n$ admissible. Since $g$ has trivial projection in $\overline{G}_n$ fore every admissible $n$, we deduce that $g$ fixes $v$, and thus it fixes all vertices sufficiently far from the root. It follows that $\operatorname{ker}(\pi)$ consists of permutations with finite support, hence it is locally finite. As noted, this shows that $\ker p$ is locally finite and concludes the proof of (i) under the assumption that $(d_i)$ is bounded.

\begin{figure}
  \begin{center}
    \vspace{-20pt}
    \includegraphics[width=0.8\textwidth]{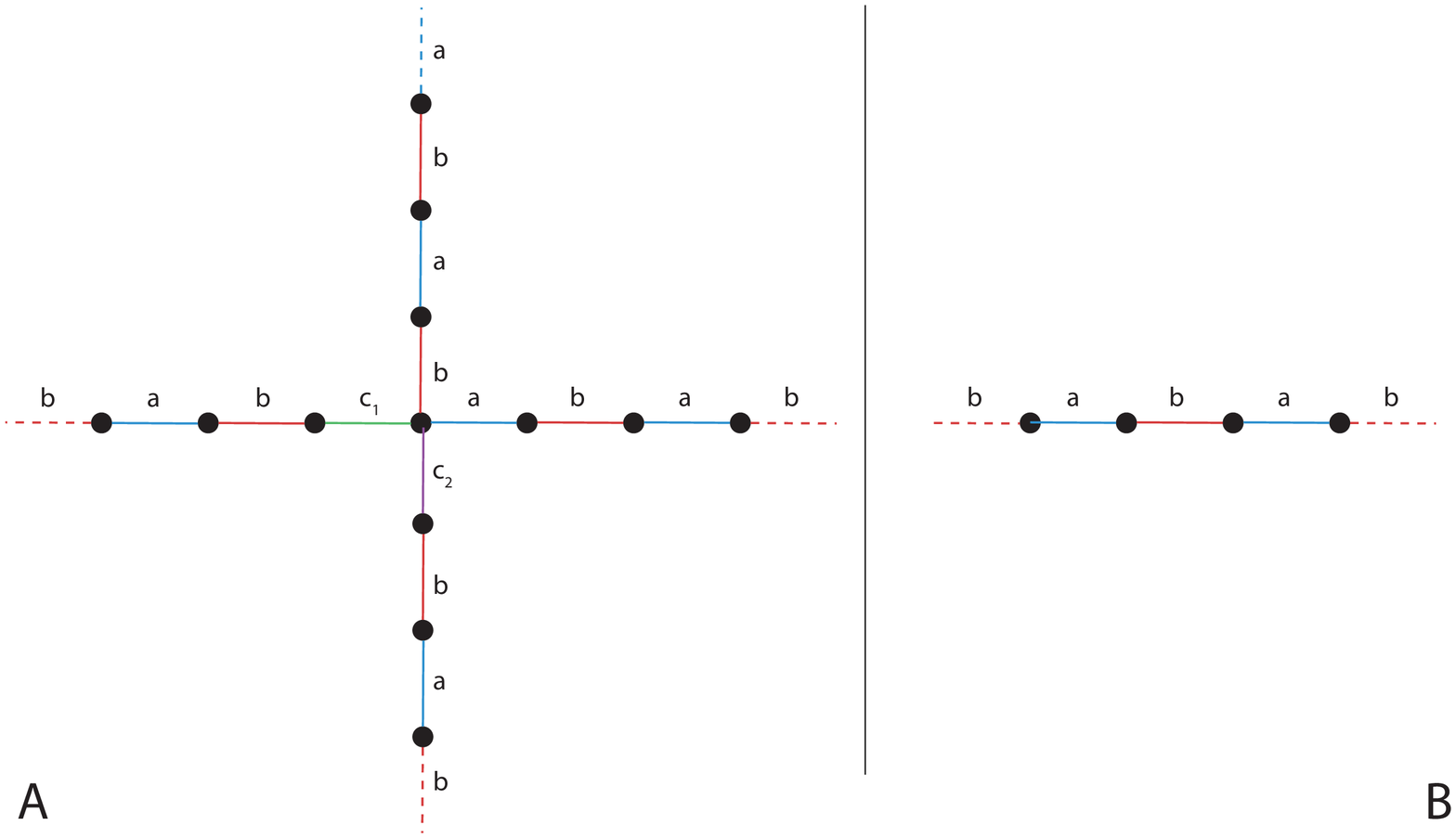}
  \end{center}
  \vspace{-10pt}
  \caption{A: the graph $\overline{T}_n$ for $n=4$. B: the graph $\hat{T}$\label{fig:not-bubble}}

  \begin{center}
    %\vspace{}
    \includegraphics[width=0.8\textwidth]{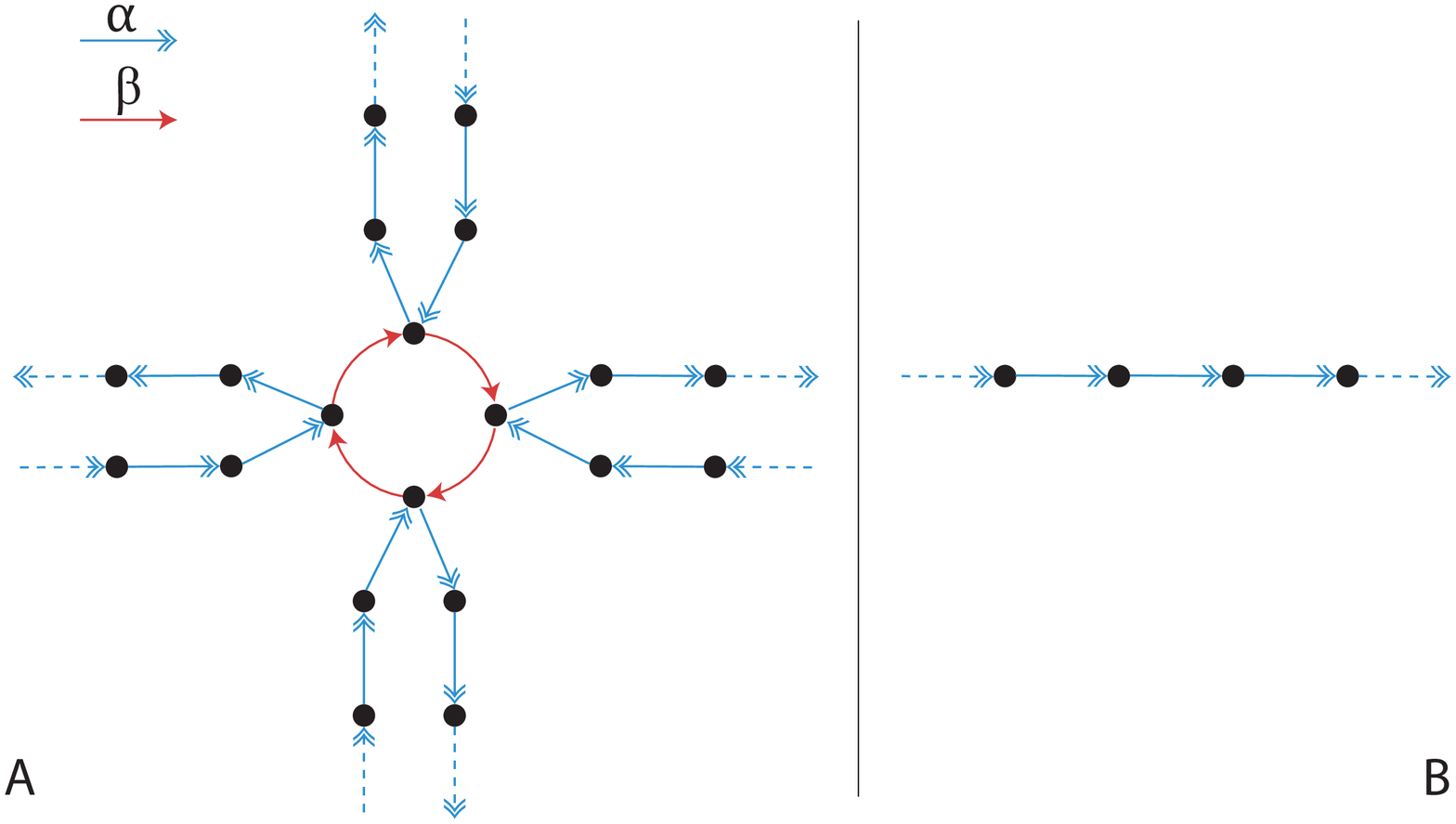}
  \end{center}
  \vspace{-10pt}
  \caption{A: the graph $\overline{\Theta}_n$ for $n=4$. B: the graph $\hat{\Theta}$\label{fig:bubble-son}}
\end{figure}

To remove this assumption, write $G$ as the ascending union of its finitely generated subgroups $G\vert_j$ generated by $S_j=\{a, b, c_1, \ldots c_j\}$, and let $T \vert_j$ be the graph obtained from $T$ by removing all edges labelled by $c_i, i> j$. Essentially the same argument as in the case of a bounded degree sequence shows that we have surjections $p|_j\colon G|_j\to D_\infty$ with locally finite kernel (a minor modification is needed since the graph $T\vert_j$ is no longer connected, hence strictly speaking it is not an element of $\sch(G\vert_j, S_j)$: consider instead the closure of the orbits of all its connected components and argue in a similar way). This shows that we have surjections $p|_j\colon G|_j\to D_\infty$ with locally finite kernel, mapping $a$ and $b$ to the standard generators of $D_\infty$ and each $c_i$ to 1. Thus they globally define a surjection $p\colon G\to D_\infty$ with locally finite kernel.

We now prove (ii). Assume first that the sequence $(d_i)$ is bounded. We consider the closure of $(\theta, o)$ in $\sch(\Gamma, S)$. Similarly to the previous case, the graphs in the closure have underlying unrooted graph of three types: the graph $\Theta$, the graphs of the form $\overline{\theta}_n$ for $n$ admissible (see Figure \ref{fig:bubble-son} A) and the graph $\hat{\Theta}$ (see Figure \ref{fig:bubble-son} B).

Denote by $\overline{\Gamma}_n$ the group defined by the graph $\overline{\theta}_n$, by $\overline{\pi}_n: \Gamma\to \hat{\Gamma}_n$ the corresponding projection, and by $\overline{\Gamma}$ the  image of $\Gamma$ into $\prod_n \overline{\Gamma}_n$ under the diagonal map $\pi=\prod_n\pi_n$. As in the case of $G$,  $\ker{\pi}$ is locally finite.  Closer inspection shows that the group $\overline{\Gamma}_n$ is isomorphic to $\Z/n\Z\wr \Z$, and the projection $\pi_n$ sends the generator $\alpha$ to a standard generator of $\Z$, and the generator $\beta$ to a standard generator of the cyclic lamp group over $0$. It follows that $\overline{\Gamma}$ is isomorphic to $\Z/\ell\Z\wr \Z$, where $\ell$ is the l.c.d. of all the admissible integers $n\in \N$. This concludes the proof for $\Gamma$ under the assumption that $(d_i)$ is bounded.

Let us assume now that $(d_i)$ is unbounded. In this case the argument for $\Gamma$ is  slightly different from the one used for $G$, since $\Gamma$ is  still finitely generated, and we may still work in the space $\sch(\Gamma, S)$. The closure of the orbit of $(\Theta, o)$ now contains the same graphs as in the bounded degree case, plus the  graph $\overline{\Theta}_\infty$ obtained by taking a limit of  graphs of the form $(\overline{\Theta}_n, x)$ when $n$ goes to $\infty$ and the root $x$ belongs to a branching cycle (in plain words,  $\overline{\Theta}_\infty$ consists of a bi-infinite oriented line labelled by $\beta$, to each vertex of which is glued a bi-infinite line labelled by $\alpha$). The group permutation group of $\overline{\Gamma}_\infty$ defined by its labelling is easily seen to be isomorphic to $\Z\wr \Z$. To conclude the proof, it is enough to show that the kernel of the natural surjection $\pi_\infty\colon \Gamma \to \overline{\Gamma}_\infty$ consists of finitely supported permutations of $\Theta$.  To this extent, let $g\in \operatorname{Ker}\pi_\infty$, and write $g=s_1\ldots s_n$ as a product of generators. Observe that $\overline{\Theta}_\infty$ covers all graphs of the form $\overline{\Theta}_n$, for all $n\in \N$ (the covering map is given by folding the line labelled by $\beta$ to an $n$-cycle, and identifing two $\alpha$-lines if the vertices of the $\beta$-line to which they are glued are identified).  If a vertex $v\in \Theta$ lies sufficiently far from the root, then the ball of a fixed radius $r$ around $v$ is isomorphic to a   ball of radius $r$ in a graph of the form $\overline{\Theta}_n$, for some $n\in \N$. Since $g\in \operatorname{Ker}\pi_\infty$, the path labelled by $s_1, \ldots s_n$ in this ball lifts to a closed path in $\overline{\Theta}_\infty$, hence it was already closed. This shows that $gv=v$. It follows that $g$ is a permutation with finite support, concluding the proof. \qedhere
\end{proof}

%\bibliography{bubble}
%\bibliographystyle{alpha}

%\appendix

%\section{appendicitis}

%{*}{*}{*}probably we should move that to somewhere else, it's not
%worth a whole appendix{*}{*}{*}

\end{document}